\newcounter{minutes}\setcounter{minutes}{\time}
\newcounter{hours}\setcounter{hours}{\time}
\newcommand{\real}{\operatorname{Re}}
\newcommand{\logo}{\operatorname{Log}}
\newtheorem{Theorem}{Theorem}
\newtheorem{lemma}{Lemma}
\keywords{Normalized Bessel functions of the first kind; convex functions; radius of convexity; Dini function; residue theorem; minimum principle for harmonic functions; zeros of Bessel functions} \subjclass[2010]{33C10, 30C45.}
\title[\'A. Baricz, R. Sz\'asz/The radius of convexity of Bessel functions]{The radius of convexity of normalized Bessel functions of the first kind}
\author[]{\'Arp\'ad Baricz}
\address{Department of Economics, Babe\c{s}-Bolyai University, 400591 Cluj-Napoca, Romania} \email{bariczocsi@yahoo.com}
\author[]{R\'obert Sz\'asz}
\address{Department of Mathematics and Informatics, Sapientia Hungarian University of Transylvania, 540485 T\^argu-Mure\c{s}, Romania}
\email{rszasz@ms.sapientia.ro}
\thanks{A part of this paper was presented by \'A. Baricz at the international conference Computational Methods and Function Theory, which was held in Shantou, China, between June 10-14, 2013. The research of \'A. Baricz was supported by a research grant of the Romanian National Authority for Scientific Research, CNCS-UEFISCDI, project number PN-II-RU-TE-2012-3-0190/2014.}
\begin{document}

\def\thefootnote{}
\footnotetext{ \texttt{File:~\jobname .tex,
          printed: \number\year-0\number\month-0\number\day,
          \thehours.\ifnum\theminutes<10{0}\fi\theminutes}
} \makeatletter\def\thefootnote{\@arabic\c@footnote}\makeatother

\maketitle

\begin{abstract}
In this paper we determine the radius of convexity for three kind of
normalized Bessel functions of the first kind. In the mentioned cases the normalized Bessel
functions are starlike-univalent and convex-univalent, respectively, on
the determined disks. The key tools in the proofs of the main results are some new Mittag-Leffler expansions for quotients of Bessel functions of the first kind, special properties of the zeros of Bessel functions of the first kind and their derivative, and the fact that the smallest positive zeros of some Dini functions are less than the first positive zero of the Bessel function of the first kind. Moreover, we find the optimal parameters for which these normalized Bessel functions are convex in the open unit disk. In addition, we disprove a conjecture of Baricz and Ponnusamy concerning the convexity of the Bessel function of the first kind.
\end{abstract}

\section{\bf Introduction and Main Results}
Let $\mathbb{D}(z_0,r)=\left\{z\in\mathbb{C}:|z-z_0|<r\right\}$ denote the open disk centered
in $z_0$ and of radius $r>0,$ and let $\mathcal{S}$ be the class
of analytic and univalent functions defined in the open unit disk
$\mathbb{D}=\left\{z\in\mathbb{C}:|z|<1\right\}$ and having the property that $f(0)=f'(0)-1=0$.
Recall that a function $f\in\mathcal{S}$ belongs to the class $\mathcal{K}$ of convex functions if maps the unit disk conformally onto $f(\mathbb{D}),$ which is a convex domain in $\mathbb{C},$ that is, the domain $f(\mathbb{D})\subset\mathbb{C}$ contains the entire line segment joining any pair of its points. It is well-known that the class of convex functions can be characterized as
 $$\mathcal{K} = \left\{f\in \mathcal{S} \left| \real \left(1+\frac{zf''(z)}{f'(z)}\right) > 0, \ z\in \mathbb{D}\right. \right\}.$$
 Moreover, for $\alpha\in[0,1)$ we consider also the class of convex functions of order
 $\alpha$ defined by
 $$\mathcal{K}(\alpha) = \left\{f\in \mathcal{S} \left| \real \left(1+\frac{zf''(z)}{f'(z)}\right) > \alpha, \ z\in \mathbb{D} \right. \right\}.$$
Now let us consider the radius of convexity, and the radius of convexity of order $\alpha$ of the function $f$
$$r^c(f)=\sup\left\{r\in(0,\infty)\left|  \real \left(1+\frac{zf''(z)}{f'(z)}\right) >0,\ \ z\in\mathbb{D}(0,r)\right.\right\}$$
and
$$r^c_\alpha(f)=\sup\left\{r\in(0,\infty)\left|  \real \left(1+\frac{zf''(z)}{f'(z)}\right) >\alpha,\ \ z\in\mathbb{D}(0,r)\right.\right\}.$$
We note that $r^c(f)$ is in fact the largest radius for which the image domain $f(\mathbb{D}(0,r^c(f)))$ is a convex domain in $\mathbb{C}.$

The Bessel function of the first kind of order $\nu$ is defined by \cite[p. 217]{nist}
$$J_{\nu}(z)=\sum_{n\geq0}\frac{(-1)^{n}}{n!\Gamma(n+\nu+1)}\left(\frac{z}{2}\right)^{2n+\nu}.$$
In this paper we focus on the following  normalized forms
$$f_{\nu}(z)=\left(2^{\nu}\Gamma(\nu+1)J_{\nu}(z)\right)^{\frac{1}{\nu}}=z-\frac{1}{4\nu(\nu+1)}z^3+\dots,\ \nu\neq0, $$
$$g_{\nu}(z)=2^{\nu}\Gamma(\nu+1)z^{1-{\nu}}J_{\nu}(z)=z-\frac{1}{4(\nu+1)}z^3+\frac{1}{32(\nu+1)(\nu+2)}z^5-\dots,$$
$$h_{\nu}(z)=2^{\nu}\Gamma(\nu+1)z^{1-\frac{\nu}{2}}J_{\nu}(\sqrt{z})=z-\frac{1}{4(\nu+1)}z^2+\dots,$$
where $\nu>-1.$ We note that $$f_{\nu}(z)=\exp\left(\frac{1}{\nu}\logo\left(2^{\nu}\Gamma(\nu+1)J_{\nu}(z)\right)\right),$$
where $\logo$ represents the principal branch of the logarithm, and in this paper every
multi-valued function is taken with the principal branch. We also mention that the univalence, starlikeness and convexity of other functions involving the Bessel function of the first kind were studied extensively in several papers. We refer to \cite{mathematica,publ,lecture,bsk,samy,brown,todd,szasz,szasz2} and to the references therein.

In this paper we make a further contribution to the subject by showing the following new sharp results. The next section contains some preliminary results, and the proofs of Theorems \ref{th1}, \ref{th2} and \ref{th3} can be found in section 3.

\begin{Theorem}\label{th1}
If $\nu>0$ and $\alpha\in[0,1),$  then the radius of
convexity of order $\alpha$ of the function $f_\nu$ is the smallest positive
root of the equation
$$
1+\frac{rJ_\nu''(r)}{J_\nu'(r)}+\left(\frac{1}{\nu}-1\right)\frac{rJ_\nu'(r)}{J_\nu(r)}=\alpha.
$$
Moreover, $r^{c}_{\alpha}(f_{\nu})<j_{\nu,1}'<j_{\nu,1},$ where $j_{\nu,1}$ and $j_{\nu,1}'$ denote the first positive zeros of $J_{\nu}$ and $J_{\nu}',$ respectively.
\end{Theorem}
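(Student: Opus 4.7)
My plan is to write $\Phi(z):=1+zf_\nu''(z)/f_\nu'(z)$ as a Mittag--Leffler series, prove the pointwise inequality $\real\Phi(z)\ge\Phi(|z|)$ for every $z\in\mathbb{D}(0,j_{\nu,1}')$, and then read off the radius of convexity as the first positive root of $\Phi(r)=\alpha$. Logarithmic differentiation of $f_\nu(z)=[2^\nu\Gamma(\nu+1)J_\nu(z)]^{1/\nu}$ gives $f_\nu'/f_\nu=(1/\nu)J_\nu'/J_\nu$, and a second logarithmic derivative produces the claimed expression
\[
\Phi(z)=1+\frac{zJ_\nu''(z)}{J_\nu'(z)}+\left(\frac{1}{\nu}-1\right)\frac{zJ_\nu'(z)}{J_\nu(z)}.
\]
From the Hadamard factorizations of the even entire functions $J_\nu(z)/z^\nu$ and $J_\nu'(z)/z^{\nu-1}$, whose nonzero zero-sets are $\{\pm j_{\nu,n}\}$ and $\{\pm j_{\nu,n}'\}$, one obtains the Mittag--Leffler expansions
\[
\frac{zJ_\nu'(z)}{J_\nu(z)}=\nu-2\sum_{n\ge 1}\frac{z^2}{j_{\nu,n}^2-z^2},\qquad \frac{zJ_\nu''(z)}{J_\nu'(z)}=(\nu-1)-2\sum_{n\ge 1}\frac{z^2}{j_{\nu,n}'^2-z^2},
\]
whence
\[
\Phi(z)=1-2\sum_{n\ge 1}\frac{z^2}{j_{\nu,n}'^2-z^2}+2\Bigl(1-\tfrac{1}{\nu}\Bigr)\sum_{n\ge 1}\frac{z^2}{j_{\nu,n}^2-z^2}.
\]

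The key pointwise estimate is $\real\bigl(z^2/(a^2-z^2)\bigr)\le r^2/(a^2-r^2)$ for $|z|=r<a$, with equality at $z=\pm r$; a clean proof runs by observing that the M\"obius map $w\mapsto w/(a^2-w)$ is univalent with real coefficients, so its image of $|w|\le r^2$ is a closed disk symmetric about the real axis whose rightmost point is $r^2/(a^2-r^2)$. For $0<\nu\le 1$ the weight $2(1-1/\nu)$ is non-positive, both sums carry non-positive weights, and the estimate directly yields $\real\Phi(z)\ge\Phi(|z|)$. The main obstacle is the case $\nu>1$, where this weight is strictly positive; I remove the wrong sign by using the interlacing $j_{\nu,n}'<j_{\nu,n}$ to algebraically rewrite
\[
\Phi(z)=1-\frac{2}{\nu}\sum_{n\ge 1}\frac{z^2}{j_{\nu,n}'^2-z^2}-2\Bigl(1-\tfrac{1}{\nu}\Bigr)\sum_{n\ge 1}\frac{(j_{\nu,n}^2-j_{\nu,n}'^2)\,z^2}{(j_{\nu,n}^2-z^2)(j_{\nu,n}'^2-z^2)},
\]
so that every weight is strictly negative, and then establishing the refined inequality $\real\bigl(z^2/[(a^2-z^2)(b^2-z^2)]\bigr)\le r^2/[(a^2-r^2)(b^2-r^2)]$ for $|z|=r<\min(a,b)$. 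This refined inequality comes from the same geometric argument: $w\mapsto w/[(a^2-w)(b^2-w)]$ is univalent on $|w|<ab$ (two points have equal image only when their product equals $a^2b^2$) and has real coefficients, hence sends $|w|\le r^2$ to a Jordan domain symmetric about the real axis whose rightmost point is $r^2/[(a^2-r^2)(b^2-r^2)]$.

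With $\real\Phi(z)\ge\Phi(|z|)$ on $\mathbb{D}(0,j_{\nu,1}')$ in hand, let $r_\alpha^\ast$ be the smallest positive root of $\Phi(r)=\alpha$. Such a root exists in $(0,j_{\nu,1}')$ since $\Phi(0)=1>\alpha$ and $\Phi(r)\to-\infty$ as $r\uparrow j_{\nu,1}'$ (because $f_\nu'$ has a simple zero at $j_{\nu,1}'$, forcing $zf_\nu''/f_\nu'$ to blow up to $-\infty$ from the left). For $z\in\mathbb{D}(0,r_\alpha^\ast)$ one has $\real\Phi(z)\ge\Phi(|z|)>\alpha$, giving $r_\alpha^c(f_\nu)\ge r_\alpha^\ast$; conversely $\Phi(r_\alpha^\ast)=\alpha$ along the real axis forces equality. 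Finally $j_{\nu,1}'<j_{\nu,1}$ follows from Rolle's theorem applied to $J_\nu$ on $[0,j_{\nu,1}]$, completing the chain $r_\alpha^c(f_\nu)<j_{\nu,1}'<j_{\nu,1}$.
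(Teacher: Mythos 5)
Your overall strategy coincides with the paper's: the same Mittag--Leffler expansion of $\Phi(z)=1+zf_{\nu}''(z)/f_{\nu}'(z)$ in terms of the zeros $j_{\nu,n}$ and $j_{\nu,n}'$, the same case split at $\nu=1$, the same appeal to the interlacing $j_{\nu,n}'<j_{\nu,n}$, and the same endgame ($\real\Phi(z)\geq\Phi(|z|)$ together with $\Phi(0^+)=1$ and $\Phi(r)\to-\infty$ as $r\nearrow j_{\nu,1}'$). The one genuinely different ingredient is your treatment of $\nu>1$: the paper proves the two-term inequality of Lemma \ref{lem0}, namely $\lambda\real\left(\frac{z}{a-z}\right)-\real\left(\frac{z}{b-z}\right)\geq\lambda\frac{|z|}{a-|z|}-\frac{|z|}{b-|z|}$ for $a>b>|z|$ and $\lambda\in[0,1]$, by showing that $t\mapsto\real\left(\frac{z}{t-z}\right)-\frac{|z|}{t-|z|}$ is increasing; you instead regroup the series so that all weights become negative, at the price of needing $\real\frac{w}{(a^2-w)(b^2-w)}\leq\frac{\rho}{(a^2-\rho)(b^2-\rho)}$ for $|w|=\rho<b^2<a^2.$ The two routes are algebraically equivalent (your regrouping is exactly the partial-fraction identity hidden in Lemma \ref{lem0}).

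There is, however, a gap in your justification of that refined inequality. For the M\"obius map $w\mapsto w/(a^2-w)$ the image of a circle is again a circle, so symmetry about the real axis really does force the rightmost point to be the larger of the two real boundary values; but for $w\mapsto w/[(a^2-w)(b^2-w)]$ the image of $|w|=\rho$ is merely a Jordan curve, and a Jordan curve symmetric about the real axis that meets the axis in exactly two points can have off-axis points of strictly larger real part (the curve $t\mapsto(2-\cos 2t)e^{\mathrm{i}t}$ meets the real axis only at $\pm1$ yet reaches real part $\sqrt{2}$). So univalence plus real coefficients does not deliver the "rightmost point" claim. Fortunately the inequality you need is true for a one-line reason that bypasses all geometry: $\real g(w)\leq|g(w)|=\frac{|w|}{|a^2-w|\cdot|b^2-w|}\leq\frac{\rho}{(a^2-\rho)(b^2-\rho)},$ by the triangle inequality (the same argument gives your first estimate, which is the paper's inequality \eqref{kk}). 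With that repair your proof is complete; the factorizations, the regrouping, the identification of the radius as the smallest positive root, and $j_{\nu,1}'<j_{\nu,1}$ via Rolle's theorem are all correct.
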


\begin{Theorem}\label{th2}
If $\nu>-1$ and  $\alpha\in[0,1),$ then the radius of
convexity of order $\alpha$ of the function $g_{\nu}$ is the smallest positive root of
the equation
$$
 1+r\frac{rJ_{\nu+2}(r)-3J_{\nu+1}(r)}{J_{\nu}(r)-rJ_{\nu+1}(r)}=\alpha.
$$
Moreover, we have $r^c_{\alpha}(g_{\nu})<\alpha_{\nu,1}<j_{\nu,1},$ where $\alpha_{\nu,1}$ is the first positive zero of the Dini function $z\mapsto (1-\nu)J_\nu(z)+zJ'_\nu(z).$
 \end{Theorem}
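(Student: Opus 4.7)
The plan is to reduce the problem, via a Mittag--Leffler expansion of $zg_\nu''(z)/g_\nu'(z)$, to minimising a monotone real function on an interval whose right endpoint is $\alpha_{\nu,1}$, the first positive zero of the Dini function.

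First I would compute $g_\nu'(z)$ and $zg_\nu''(z)/g_\nu'(z)$ explicitly. Writing $g_\nu(z)=2^\nu\Gamma(\nu+1)\,z\cdot z^{-\nu}J_\nu(z)$ and using $\frac{d}{dz}[z^{-\nu}J_\nu(z)]=-z^{-\nu}J_{\nu+1}(z)$, one obtains
$$g_\nu'(z)=2^\nu\Gamma(\nu+1)\,z^{-\nu}\bigl[J_\nu(z)-zJ_{\nu+1}(z)\bigr].$$
Since $zJ_\nu'(z)=\nu J_\nu(z)-zJ_{\nu+1}(z)$, the bracket equals the Dini function $(1-\nu)J_\nu(z)+zJ_\nu'(z)$. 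Differentiating once more and repeatedly applying the recurrences $zJ_\nu'(z)=\nu J_\nu(z)-zJ_{\nu+1}(z)$ and $zJ_{\nu+1}'(z)=(\nu+1)J_{\nu+1}(z)-zJ_{\nu+2}(z)$, the $\nu$-terms cancel, giving
$$1+\frac{zg_\nu''(z)}{g_\nu'(z)}=1+z\cdot\frac{zJ_{\nu+2}(z)-3J_{\nu+1}(z)}{J_\nu(z)-zJ_{\nu+1}(z)},$$
so the equation displayed in the theorem is exactly $\real(1+zg_\nu''(z)/g_\nu'(z))=\alpha$ specialised to $z=r$.

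Next I would invoke the preliminaries (the reality of zeros of the Dini function $J_\nu-zJ_{\nu+1}$ for $\nu>-1$, and $\alpha_{\nu,1}<j_{\nu,1}$) to justify, via Hadamard factorisation of the even entire function $g_\nu'$ with $g_\nu'(0)=1$,
$$g_\nu'(z)=\prod_{n\geq 1}\left(1-\frac{z^{2}}{\alpha_{\nu,n}^{2}}\right),\qquad \frac{zg_\nu''(z)}{g_\nu'(z)}=-\sum_{n\geq 1}\frac{2z^{2}}{\alpha_{\nu,n}^{2}-z^{2}}.$$
Since every $\alpha_{\nu,n}^{2}$ is real and positive, for $|z|\leq r<\alpha_{\nu,1}$ the elementary inequality $\real\bigl(z^{2}/(\alpha^{2}-z^{2})\bigr)\leq r^{2}/(\alpha^{2}-r^{2})$ (with equality at $z=r$) applies term by term and yields
$$\real\!\left(1+\frac{zg_\nu''(z)}{g_\nu'(z)}\right)\geq 1+\frac{rg_\nu''(r)}{g_\nu'(r)},$$
the minimum being attained at the real point $z=r$.

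Finally I would study the real function $\varphi_\nu(r)=1+rg_\nu''(r)/g_\nu'(r)$ on $(0,\alpha_{\nu,1})$. Differentiating the Mittag--Leffler expansion,
$$\varphi_\nu'(r)=-\sum_{n\geq 1}\frac{4r\,\alpha_{\nu,n}^{2}}{(\alpha_{\nu,n}^{2}-r^{2})^{2}}<0,$$
so $\varphi_\nu$ is strictly decreasing, $\varphi_\nu(0^{+})=1$ and $\varphi_\nu(r)\to-\infty$ as $r\to\alpha_{\nu,1}^{-}$. Hence for every $\alpha\in[0,1)$ there is a unique $r_\alpha\in(0,\alpha_{\nu,1})$ with $\varphi_\nu(r_\alpha)=\alpha$; together with the minimum principle established above, this identifies $r_\alpha$ as $r^{c}_\alpha(g_\nu)$ and gives the strict inequalities $r^{c}_\alpha(g_\nu)<\alpha_{\nu,1}<j_{\nu,1}$. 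The main obstacle is the Hadamard/Mittag--Leffler step, which hinges on the reality of the zeros of $J_\nu(z)-zJ_{\nu+1}(z)$ and on a growth estimate for $g_\nu'$; these are precisely what the preliminary section must provide, and once they are in hand the rest of the argument is the standard convexity-radius machinery applied in the paper.
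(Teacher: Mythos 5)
Your argument is correct and, except for one step, follows the same route as the paper: reduce $1+zg_\nu''(z)/g_\nu'(z)$ to a Mittag--Leffler sum over the zeros $\alpha_{\nu,n}$, push the real part down to the positive real axis via the elementary inequality $\real\left(\frac{w}{\mu-w}\right)\leq\frac{|w|}{\mu-|w|}$ applied with $w=z^2$, and then exploit strict monotonicity of $r\mapsto 1+rg_\nu''(r)/g_\nu'(r)$ on $(0,\alpha_{\nu,1})$ together with the Ismail--Muldoon bound $\alpha_{\nu,1}<j_{\nu,1}$. The genuine difference is how you obtain the expansion $zg_\nu''(z)/g_\nu'(z)=-\sum_{n\geq1}2z^2/(\alpha_{\nu,n}^2-z^2)$: the paper's Lemma \ref{lem4} derives it by integrating $\frac{z}{w(w-z)}\cdot\frac{wJ_{\nu+2}(w)-3J_{\nu+1}(w)}{J_\nu(w)-wJ_{\nu+1}(w)}$ over expanding rectangles and using the Hankel-function asymptotics of Lemma \ref{lem3} to kill the boundary contribution, whereas you factor the even entire function $g_\nu'$ (which is of growth order $\frac{1}{2}$ as a function of $z^2$, hence of genus zero) by Hadamard's theorem and differentiate logarithmically. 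Your route is shorter and avoids the delicate choice of abscissae $a_m$ needed to keep the integrand bounded, at the cost of invoking Hadamard's factorization and a growth estimate; it is in fact exactly the alternative proof the authors mention in their ``Added in proof'' note, so it is a legitimate substitute for Lemma \ref{lem4}. The only required input you correctly identify is the reality and simplicity of the zeros of $J_\nu(z)-zJ_{\nu+1}(z)=(1-\nu)J_\nu(z)+zJ_\nu'(z)$, which Lemma \ref{lem1} supplies since $a/b+\nu=1>0$. One cosmetic remark: you denote your auxiliary real function by $\varphi_\nu$, which collides with the paper's use of $\varphi_\nu(z)=h_\nu(z)/z$; the paper calls it $v_\nu$.
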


\begin{Theorem}\label{th3}
If $\nu>-1$ and $\alpha\in[0,1),$  then the radius of
convexity of order $\alpha$ of the function $h_{\nu}$ is the smallest  positive root of the equation
$$
 1+\frac{r^{\frac{1}{2}}}{2}\cdot\frac{r^{\frac{1}{2}}J_{\nu+2}(r^{\frac{1}{2}})-
 4J_{\nu+1}(r^{\frac{1}{2}})}{2J_{\nu}(r^{\frac{1}{2}})-r^{\frac{1}{2}}J_{\nu+1}(r^{\frac{1}{2}})}=\alpha.
$$
Moreover, we have $r^c_{\alpha}(h_{\nu})<\beta_{\nu,1}<j_{\nu,1},$ where $\beta_{\nu,1}$ is the first positive zero of the Dini function $z\mapsto (2-\nu)J_\nu(z)+zJ'_\nu(z).$
 \end{Theorem}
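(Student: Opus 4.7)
The plan is to follow the Mittag--Leffler / Hadamard template already used for Theorems \ref{th1} and \ref{th2}. First, setting $w=\sqrt z$ and using twice the contiguous identity $\tfrac{d}{dw}\bigl(w^{-\nu}J_\nu(w)\bigr)=-w^{-\nu}J_{\nu+1}(w)$, direct differentiation of $h_\nu(z)=2^\nu\Gamma(\nu+1)\,z^{1-\nu/2}J_\nu(\sqrt z)$ gives
$$h_\nu'(z)=2^{\nu-1}\Gamma(\nu+1)\,z^{-\nu/2}\bigl[2J_\nu(\sqrt z)-\sqrt z\,J_{\nu+1}(\sqrt z)\bigr]$$
and an analogous expression for $h_\nu''$. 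Substituting into $1+zh_\nu''(z)/h_\nu'(z)$ reproduces exactly the closed form displayed in the statement. The three-term recurrence $wJ_{\nu+1}(w)=\nu J_\nu(w)-wJ_\nu'(w)$ then converts the bracket above into the Dini function $(2-\nu)J_\nu(w)+wJ_\nu'(w)$, showing that the positive zeros of $h_\nu'$ are precisely the squares $\beta_{\nu,n}^2$ of the positive zeros of that Dini function.

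The second step is a Mittag--Leffler expansion. From the Weierstrass product $J_\nu(w)=\tfrac{(w/2)^\nu}{\Gamma(\nu+1)}\prod_{n\ge 1}(1-w^2/j_{\nu,n}^2)$ one obtains $h_\nu(z)=z\prod_{n\ge 1}(1-z/j_{\nu,n}^2)$, so $h_\nu'$ is entire of order $1/2$ with $h_\nu'(0)=1$. Invoking the reality of the zeros of $(2-\nu)J_\nu(w)+wJ_\nu'(w)$ for $\nu>-1$ (a statement I would pull from the preliminary section), the Hadamard factorisation reads $h_\nu'(z)=\prod_{n\ge 1}(1-z/\beta_{\nu,n}^2)$, and logarithmic differentiation yields
$$1+\frac{zh_\nu''(z)}{h_\nu'(z)}\;=\;1-\sum_{n\ge 1}\frac{z}{\beta_{\nu,n}^2-z}.$$

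The third step extracts the infimum on circles via the elementary inequality $\real\bigl(z/(b-z)\bigr)\le |z|/(b-|z|)$, valid for every real $b>|z|$ with equality at $z=|z|$. Applied termwise, it yields for $|z|=r<\beta_{\nu,1}^2$
$$\real\!\left(1+\frac{zh_\nu''(z)}{h_\nu'(z)}\right)\;\ge\;1+\frac{rh_\nu''(r)}{h_\nu'(r)},$$
with equality at $z=r$. The right-hand side decreases strictly from $1$ at $r=0$ to $-\infty$ as $r\uparrow \beta_{\nu,1}^2$, so the scalar equation in the statement has a unique positive root in that interval, and this root is the required radius of convexity of order $\alpha$, automatically satisfying the announced strict upper bound by the first positive zero of $h_\nu'$. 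Finally, the comparison $\beta_{\nu,1}<j_{\nu,1}$ comes from the intermediate value theorem: the Dini function $(2-\nu)J_\nu(w)+wJ_\nu'(w)$ is strictly positive for small $w>0$ (by the leading term of the Bessel series) and equals $j_{\nu,1}J_\nu'(j_{\nu,1})<0$ at $w=j_{\nu,1}$.

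The main obstacle, exactly as in Theorems \ref{th1} and \ref{th2}, is the reality of the zeros of the Dini function $(2-\nu)J_\nu+wJ_\nu'$ for every $\nu>-1$: without it the Hadamard product above would carry complex factors and the termwise real-part inequality could not be invoked. I would quote this reality property from the preparatory lemmas, where it is presumably established by a Laguerre--P\'olya / Euler--Rayleigh type argument or from a general statement on zeros of cross products of Bessel functions.
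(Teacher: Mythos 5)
Your proposal is correct and follows the paper's overall scheme (closed-form computation of $1+zh_\nu''(z)/h_\nu'(z)$, termwise real-part estimate, monotonicity and the intermediate value theorem for the scalar equation), but it substitutes a genuinely different argument for the key expansion $1+zh_\nu''(z)/h_\nu'(z)=1-\sum_{n\ge1}z/(\beta_{\nu,n}^2-z)$. The paper obtains this from Lemma \ref{lem5}, proved by integrating $\frac{z}{w(w-z)}\cdot\frac{wJ_{\nu+2}(w)-4J_{\nu+1}(w)}{2J_{\nu}(w)-wJ_{\nu+1}(w)}$ over expanding rectangles and controlling the integrand on the boundary via the Hankel-function asymptotics of Lemma \ref{lem3}; you instead observe that $h_\nu'$ is entire of order $\tfrac12$ with $h_\nu'(0)=1$ and zeros exactly at the $\beta_{\nu,n}^2$, so Hadamard factorisation gives $h_\nu'(z)=\prod_{n\ge1}(1-z/\beta_{\nu,n}^2)$ and logarithmic differentiation does the rest. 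This is shorter and avoids all asymptotic estimates; it is also the route the authors themselves endorse in the ``Added in proof'' note for the analogous Lemma \ref{lem4}. The reality and simplicity of the zeros, which you flag as the one ingredient you must import, is precisely Lemma \ref{lem1} of the preliminaries, applicable here since $a/b+\nu=(2-\nu)+\nu=2>0$, so nothing is left open. For the bound $\beta_{\nu,1}<j_{\nu,1}$ you give an elementary sign-change argument (the Dini function is positive near $0$ and equals $j_{\nu,1}J_\nu'(j_{\nu,1})<0$ at $j_{\nu,1}$), whereas the paper cites Ismail and Muldoon together with Watson's reality result; your version is self-contained and equally valid. One small caveat: what your argument (and, for that matter, the paper's own proof) actually yields is $r^c_\alpha(h_\nu)<\beta_{\nu,1}^2$, the first positive zero of $h_\nu'$, so the inequality $r^c_\alpha(h_\nu)<\beta_{\nu,1}$ as literally displayed in the theorem is not what either derivation produces.
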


The real number
$$r^{*}(f)=\sup\left\{r\in(0,\infty)\left| \real\left(\frac{zf'(z)}{f(z)}\right)\right. > 0, \ \
z\in \mathbb{D}(0,r)\right\}$$
is called the radius of starlikeness of the function $f$ and it is the largest radius such that
$f\left(\mathbb{D}(0,r^{*}(f))\right)$ is a starlike domain with
respect to $0.$ It is important to mention here that the problem on the radius of starlikeness of the functions $f_{\nu}$ and $g_{\nu}$ was first studied by Brown \cite{brown} who found these radii in the case $\nu>0.$ Recently, the authors and Kup\'an \cite{bsk} used a different approach to find the radii of starlikeness of order $\beta\in[0,1)$ for the functions $f_{\nu}$ and $g_{\nu}.$ We note that for $\nu>0$ the radius of starlikeness of $f_{\nu}$ is $j_{\nu,1}',$ while for $\nu>-1$ the radius of starlikeness of $g_{\nu}$ is $\alpha_{\nu,1},$ which was mentioned above in Theorem \ref{th2}. Note that Brown \cite{brown} used the methods of Nehari \cite{nehari} and Robertson \cite{robertson}, and the key tool in Brown's proofs was the fact that the Bessel function of the first kind is a particular solution of the Bessel differential equation. For related results the interested reader is referred to \cite{brown2,brown3,merkes,robertson,wilf} and to the references therein, and for more details we refer to \cite{bsk}. Our approach is completely different than of Brown \cite{brown,brown2,brown3}, Nehari \cite{nehari} and Robertson \cite{robertson}. The key tools in the proofs of our main results are some new Mittag-Leffler type expansions for Bessel functions of the first kind, properties of the zeros of Bessel functions, and the fact that the smallest positive zeros of some Dini function are less than the first positive zero of the Bessel function of the first kind.

Now, we are going to present some other sharp results on the functions $f_{\nu},$ $g_{\nu}$ and $h_{\nu}.$ The proofs of these results can be found in section 3.

\begin{Theorem}\label{th4} The function $f_{\nu}$ is convex of order $\alpha\in(0,1)$ in $\mathbb{D}$ if and only if
$\nu\geq\nu_{\alpha}(f_{\nu}),$ where $\nu_{\alpha}(f_{\nu})$ is the unique  root of
the equation $$\nu(\nu^2-1)J_{\nu}^2(1)+(1-\nu)(J_{\nu}'(1))^2=\alpha\nu J_{\nu}(1)J_{\nu}'(1),$$ situated in $(\nu^{\ast},\infty),$ where $\nu^{\ast}\simeq0.3901\dots$ is the root of the equation $J_{\nu}'(1)=0.$ Moreover, $f_{\nu}$ is convex in $\mathbb{D}$ if and only if $\nu\geq1.$
\end{Theorem}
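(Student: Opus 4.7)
\textbf{Proof plan for Theorem \ref{th4}.} My strategy is to reduce the $\alpha$-convexity of $f_\nu$ in $\mathbb{D}$ to the characterization of $r^c_\alpha(f_\nu)$ supplied by Theorem \ref{th1}. Setting
$$\Phi_\nu(r) = 1 + \frac{rJ_\nu''(r)}{J_\nu'(r)} + \left(\frac{1}{\nu}-1\right)\frac{rJ_\nu'(r)}{J_\nu(r)},$$
the analysis behind Theorem \ref{th1} (using the Mittag--Leffler-type expansions of Section 2 and the minimum principle for harmonic functions) yields that $\Phi_\nu$ is real-analytic and strictly decreasing on $(0,j_{\nu,1}')$, and that the minimum of $\real\Phi_\nu$ over $\overline{\mathbb{D}(0,r)}$ is attained at $z=r$. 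Since $\nu^*$ is defined by $J_{\nu^*}'(1)=0$, i.e.\ $j_{\nu^*,1}'=1$, and $\nu\mapsto j_{\nu,1}'$ is strictly increasing, we have $j_{\nu,1}'>1$ precisely when $\nu>\nu^*$. Hence for $\nu>\nu^*$,
$$f_\nu\in\mathcal{K}(\alpha)\ \Longleftrightarrow\ r^c_\alpha(f_\nu)\geq 1\ \Longleftrightarrow\ \Phi_\nu(1)\geq\alpha,$$
while for $0<\nu\leq\nu^*$, Theorem \ref{th1} already gives $r^c_\alpha(f_\nu)<j_{\nu,1}'\leq 1$, so $f_\nu\notin\mathcal{K}(\alpha)$.

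Next, I would rewrite $\Phi_\nu(1)$ using Bessel's equation at $z=1$, namely $J_\nu''(1)=-J_\nu'(1)+(\nu^2-1)J_\nu(1)$, to obtain
$$\Phi_\nu(1) = (\nu^2-1)\frac{J_\nu(1)}{J_\nu'(1)} + \frac{1-\nu}{\nu}\cdot\frac{J_\nu'(1)}{J_\nu(1)}.$$
For $\nu>\nu^*$ both $J_\nu(1)$ and $J_\nu'(1)$ are positive, so clearing the denominator $\nu J_\nu(1)J_\nu'(1)$ converts $\Phi_\nu(1)=\alpha$ into the stated equation
$$\nu(\nu^2-1)J_\nu^2(1)+(1-\nu)(J_\nu'(1))^2=\alpha\nu J_\nu(1)J_\nu'(1).$$
The main obstacle I anticipate is proving that $\psi(\nu):=\Phi_\nu(1)$ is strictly increasing on $(\nu^*,\infty)$, with $\psi(\nu)\to-\infty$ as $\nu\searrow\nu^*$ (since $\nu^*<1$, so $\nu^2-1<0$ while $J_\nu'(1)\to 0^+$) and $\psi(\nu)\to 1^-$ as $\nu\to\infty$ (from $J_\nu(1)\sim (2^\nu\Gamma(\nu+1))^{-1}$ and $J_\nu'(1)\sim\nu (2^\nu\Gamma(\nu+1))^{-1}$, giving $J_\nu'(1)/J_\nu(1)\sim\nu$ and $\psi(\nu)\sim 1-1/\nu$). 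The natural route is parameter differentiation of the Mittag--Leffler-type expansions with respect to $\nu$, combined with the classical strict monotonicity of $\nu\mapsto j_{\nu,k}$ and $\nu\mapsto j_{\nu,k}'$. Once monotonicity is granted, the intermediate value theorem produces the unique root $\nu_\alpha(f_\nu)\in(\nu^*,\infty)$ and the equivalence $f_\nu\in\mathcal{K}(\alpha)\Leftrightarrow\nu\geq\nu_\alpha(f_\nu)$.

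For the last assertion ($\alpha=0$), the equation $\psi(\nu)=0$ factors as
$$(\nu-1)\!\left[\nu(\nu+1)J_\nu^2(1)-(J_\nu'(1))^2\right]=0.$$
Using the standard identity $J_\nu'(x)/J_\nu(x)=\nu/x-J_{\nu+1}(x)/J_\nu(x)$, valid for $0<x<j_{\nu,1}$, together with $J_{\nu+1}(1),J_\nu(1)>0$ (both hold for $\nu>\nu^*$ since $1<j_{\nu,1}<j_{\nu+1,1}$), I obtain $J_\nu'(1)/J_\nu(1)<\nu<\sqrt{\nu(\nu+1)}$, so the bracketed factor is strictly positive on $(\nu^*,\infty)$. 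Thus $\nu=1$ is the unique root of $\psi(\nu)=0$ in $(\nu^*,\infty)$, and combined with the monotonicity of $\psi$ this yields $f_\nu\in\mathcal{K}\Leftrightarrow\nu\geq 1$.
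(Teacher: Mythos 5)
Your architecture matches the paper's: both proofs reduce the question to the value at $r=1$ of $\Phi_\nu(r)=1+rf_\nu''(r)/f_\nu'(r)$, dispose of $0<\nu\leq\nu^{\ast}$ via $j_{\nu,1}'\leq 1$, convert $\Phi_\nu(1)=\alpha$ into the stated algebraic equation through Bessel's differential equation, and hinge on the strict monotonicity of $\nu\mapsto\Phi_\nu(1)$ on $(\nu^{\ast},\infty)$. Your endgame for $\alpha=0$ is actually cleaner than the paper's: you show the factor $\nu(\nu+1)J_\nu^2(1)-(J_\nu'(1))^2$ is positive on $(\nu^{\ast},\infty)$ directly from $0<J_\nu'(1)/J_\nu(1)<\nu<\sqrt{\nu(\nu+1)}$, whereas the paper rewrites it via a recurrence and appeals to a numerically located second root $\nu_2\simeq 0.1246$ lying outside $(\nu^{\ast},\infty)$.

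The genuine gap is the step you yourself defer as ``the main obstacle'': the strict increase of $\psi(\nu)=\Phi_\nu(1)$. The route you propose --- parameter differentiation of the Mittag--Leffler expansions plus the classical monotonicity of $\nu\mapsto j_{\nu,n}$ and $\nu\mapsto j_{\nu,n}'$ --- only settles the interval $(\nu^{\ast},1]$. Writing $\psi(\nu)=1-\left(\frac{1}{\nu}-1\right)\sum_{n\geq1}\frac{2}{j_{\nu,n}^2-1}-\sum_{n\geq1}\frac{2}{j_{\nu,n}'^2-1}$, for $\nu\leq1$ the coefficient $\frac{1}{\nu}-1$ is nonnegative and decreasing while every summand decreases, so $\psi$ increases; but for $\nu>1$ the coefficient changes sign, the first sum enters with weight $1-\frac{1}{\nu}>0$, and increasing $\nu$ \emph{increases} that weight while \emph{decreasing} each term $\frac{2}{j_{\nu,n}^2-1}$, so the two effects compete and the naive argument yields nothing. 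The paper resolves this by a term-by-term comparison of $dj_{\nu,n}/d\nu$ and $dj_{\nu,n}'/d\nu$ using their integral representations through $K_0$ (see \eqref{gyokder} and its analogue for $j_{\nu,n}'$), the strict decrease of $K_0$, and the interlacing \eqref{interlace} to obtain $\frac{j_{\nu,n}'^2}{(j_{\nu,n}'^2-1)^2}>\frac{j_{\nu,n}^2}{(j_{\nu,n}^2-1)^2}$ together with $\frac{j_{\nu,n}'^2\cosh(2t)-\nu^2}{j_{\nu,n}'^2-\nu^2}>1-\frac{1}{\nu}$. Since every $\nu_\alpha$ with $\alpha\in(0,1)$ lies in $(1,\infty)$ (because $\psi(1)=0<\alpha$), the uniqueness of the root and the ``only if'' direction of the theorem depend precisely on this interval; the step cannot be left as a remark, and as written your proof is incomplete there.
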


\begin{Theorem}\label{th5} The function $g_{\nu}$ is convex of order $\alpha\in[0,1)$ in $\mathbb{D}$ if and only if
$\nu\geq\nu_{\alpha}(g_{\nu}),$ where $\nu_{\alpha}(g_{\nu})$ is the unique  root of
the equation $$(2\nu+\alpha-2)J_{\nu+1}(1)=\alpha J_{\nu}(1),$$ situated in $[0,\infty).$
In particular, $g_{\nu}$ is convex in $\mathbb{D}$ if and only if
$\nu\geq1.$ \end{Theorem}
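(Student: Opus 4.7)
The plan is to reduce Theorem \ref{th5} to the characterization of the radius of convexity obtained in Theorem \ref{th2}, and then analyze the resulting transcendental equation in the parameter $\nu$. By definition, $g_\nu$ is convex of order $\alpha\in[0,1)$ in $\mathbb{D}$ precisely when $r^c_\alpha(g_\nu)\geq 1$. Setting
\[
q_\nu(r):=1+\frac{r^2J_{\nu+2}(r)-3rJ_{\nu+1}(r)}{J_\nu(r)-rJ_{\nu+1}(r)},
\]
Theorem \ref{th2} identifies $r^c_\alpha(g_\nu)$ as the smallest positive root of $q_\nu(r)=\alpha$. The Mittag-Leffler-type expansion underlying that proof also shows that $\real(1+zg_\nu''(z)/g_\nu'(z))$ attains its minimum on each circle $|z|=r<\alpha_{\nu,1}$ at the real point $z=r$ and that $r\mapsto q_\nu(r)$ is strictly decreasing there. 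Hence, for $\nu$ with $\alpha_{\nu,1}>1$, the condition $r^c_\alpha(g_\nu)\geq 1$ collapses to the single scalar inequality $q_\nu(1)\geq\alpha$; when $\alpha_{\nu,1}\leq 1$ convexity on $\mathbb{D}$ fails automatically.

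The next step is algebraic simplification. The three-term recurrence $J_{\nu+2}(1)=2(\nu+1)J_{\nu+1}(1)-J_\nu(1)$ applied to the numerator of $q_\nu(1)$ gives
\[
q_\nu(1)=\frac{(2\nu-2)J_{\nu+1}(1)}{J_\nu(1)-J_{\nu+1}(1)}.
\]
For $\nu\geq 0$ one has $1<j_{\nu+1,1}$, so $J_\nu(1)$ and $J_{\nu+1}(1)$ are both positive, and $J_\nu(1)>J_{\nu+1}(1)$ is elementary (from the series or from the classical monotonicity of $\nu\mapsto J_\nu(1)$). Clearing the positive denominator, $q_\nu(1)\geq\alpha$ is equivalent to
\[
(2\nu+\alpha-2)J_{\nu+1}(1)\geq\alpha J_\nu(1),
\]
which is precisely the equation of the theorem read as an inequality. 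Setting $\alpha=0$ forces $\nu\geq 1$ since $J_{\nu+1}(1)>0$, delivering the ``in particular'' claim.

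Finally, for each $\alpha\in[0,1)$ one has to show that the function
\[
\Phi_\alpha(\nu):=(2\nu+\alpha-2)J_{\nu+1}(1)-\alpha J_\nu(1)
\]
has a unique zero in $[0,\infty)$. Numerical substitution of $J_0(1)$ and $J_1(1)$ gives $\Phi_\alpha(0)<0$ for every $\alpha\in[0,1)$, while the asymptotic $J_\nu(1)\sim(2^\nu\Gamma(\nu+1))^{-1}$ produces $J_{\nu+1}(1)/J_\nu(1)\sim 1/(2\nu+2)$ and hence $\Phi_\alpha(\nu)/J_\nu(1)\to 1-\alpha>0$, giving existence by the intermediate value theorem. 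The main obstacle is \emph{uniqueness}. The cleanest route is to factor
\[
\Phi_\alpha(\nu)=J_\nu(1)\bigl((2\nu+\alpha-2)t_\nu-\alpha\bigr),\qquad t_\nu:=\frac{J_{\nu+1}(1)}{J_\nu(1)}=\sum_{k\geq 1}\frac{2}{j_{\nu,k}^2-1},
\]
and to use the classical strict monotonicity of $\nu\mapsto j_{\nu,k}$ to show that the bracketed factor is strictly increasing in $\nu$. Reconciling the linearly increasing factor $2\nu+\alpha-2$ with the decreasing ratio $t_\nu$ to establish strict monotonicity of their product (or at least a single sign change) is the step I expect to be the most delicate, the rest being essentially formal.
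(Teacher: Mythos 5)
Your overall strategy coincides with the paper's: reduce convexity in $\mathbb{D}$ to the single scalar inequality $1+g_\nu''(1)/g_\nu'(1)\geq\alpha$ by evaluating the sharp minimum from the proof of Theorem \ref{th2} at $r=1$ (which requires $\alpha_{\nu,1}>1$ for $\nu\geq0$; this is exactly Lemma \ref{ccccccccccvvvvvvvvvvvv}), then simplify with the three-term recurrence \eqref{recBes} and the positivity $J_\nu(1)-J_{\nu+1}(1)>0$, which the paper gets from the Poisson integral \eqref{integrep}. Your computation of $q_\nu(1)=(2\nu-2)J_{\nu+1}(1)/(J_\nu(1)-J_{\nu+1}(1))$, the equivalence with $(2\nu+\alpha-2)J_{\nu+1}(1)\geq\alpha J_\nu(1)$, and the $\alpha=0$ specialization are all correct and agree with the paper.

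The genuine gap is precisely the step you flag as delicate, and it is not a cosmetic one: without it you only know that convexity of order $\alpha$ holds iff $q_\nu(1)\geq\alpha$, but not that the set of such $\nu$ is a half-line $[\nu_\alpha,\infty)$ with $\nu_\alpha$ a \emph{unique} root, which is what the theorem asserts. Your proposed route -- showing $(2\nu+\alpha-2)t_\nu$ with $t_\nu=J_{\nu+1}(1)/J_\nu(1)=\sum_{k\geq1}2/(j_{\nu,k}^2-1)$ is increasing -- pits an increasing linear factor against a decreasing series and offers no mechanism to conclude. The paper's resolution uses a different expansion: by Lemma \ref{lem4}, $q_\nu(1)=1-\sum_{n\geq1}2/(\alpha_{\nu,n}^2-1)$, where the $\alpha_{\nu,n}$ are the positive zeros of the Dini function $(1-\nu)J_\nu(z)+zJ_\nu'(z)=J_\nu(z)-zJ_{\nu+1}(z)$, and then Landau's theorem (Lemma \ref{lemlandau}, applicable because $\gamma+\nu=(1-\nu)+\nu=1\geq0$) gives that each $\nu\mapsto\alpha_{\nu,n}$ is strictly increasing, hence so is $\nu\mapsto q_\nu(1)$ on $[0,\infty)$. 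This yields uniqueness immediately, and also delivers the single sign change of your $\Phi_\alpha$ via the identity $\Phi_\alpha(\nu)=\left(J_\nu(1)-J_{\nu+1}(1)\right)\left(q_\nu(1)-\alpha\right)$. So the missing ingredient is the monotonicity in $\nu$ of the Dini zeros $\alpha_{\nu,n}$; with it your argument closes, without it the ``only if'' half of the theorem is unproved.
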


\begin{Theorem}\label{th6} The function $h_{\nu}$ is convex of order $\alpha\in[0,1)$ in $\mathbb{D}$ if and only if
$\nu\geq\nu_{\alpha}(h_{\nu}),$ where $\nu_{\alpha}(h_{\nu})$ is the unique  root of
the equation $$(2\nu+2\alpha-4)J_{\nu+1}(1)=(4\alpha-3)J_{\nu}(1),$$
situated in $[0,\infty).$
In particular, $h_{\nu}$ is convex if and only if
$\nu\geq\nu_0(h_{\nu}),$ where $\nu_0(h_{\nu})\simeq-0.1438\dots$ is the unique root of the equation
$$(2\nu-4)J_{\nu+1}(1)+3J_{\nu}(1)=0.$$ Moreover, in particular, the function $h_{\nu}$ is convex of order $\frac{3}{4}$ if and only if $\nu\geq\frac{5}{4}.$
\end{Theorem}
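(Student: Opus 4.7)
The plan is to reduce convexity of order $\alpha$ of $h_\nu$ on the whole disk $\mathbb{D}$ to a single scalar inequality by passing to the boundary in the formula of Theorem \ref{th3}. Set $P_\nu(z):=1+zh_\nu''(z)/h_\nu'(z)$, so that convexity of order $\alpha$ in $\mathbb{D}$ is equivalent to $\real P_\nu(z)>\alpha$ throughout $\mathbb{D}$. The computations in the proof of Theorem \ref{th3} yield a Mittag-Leffler type expansion of $P_\nu$ whose poles are the squares of the positive zeros of the Dini function $w\mapsto(2-\nu)J_\nu(w)+wJ_\nu'(w)$; from this expansion one reads off two facts that I would rely on directly: the minimum of $\real P_\nu$ on the circle $|z|=r$ is attained at $z=r$, and $r\mapsto P_\nu(r)$ is strictly decreasing on $(0,\beta_{\nu,1}^{2})$.

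Granting this, convexity of order $\alpha$ in $\mathbb{D}$ is equivalent to $P_\nu(1)\geq\alpha$. Substituting into the Theorem \ref{th3} expression at $r=1$ and using the three-term recurrence $J_{\nu+2}(1)=2(\nu+1)J_{\nu+1}(1)-J_\nu(1)$, one obtains after a routine manipulation
$$
P_\nu(1)-\alpha=\frac{(2\nu+2\alpha-4)J_{\nu+1}(1)-(4\alpha-3)J_\nu(1)}{2\bigl(2J_\nu(1)-J_{\nu+1}(1)\bigr)},
$$
so the threshold equation $(2\nu+2\alpha-4)J_{\nu+1}(1)=(4\alpha-3)J_\nu(1)$ stated in the theorem drops out as the vanishing of the numerator.

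The next task is to establish that this equation has a unique root in the claimed range and that the correct direction of the convexity inequality is $\nu\geq\nu_\alpha(h_\nu)$. For this I would study the auxiliary function $\Phi(\nu):=(2\nu+2\alpha-4)J_{\nu+1}(1)-(4\alpha-3)J_\nu(1)$. The cleanest route is to divide by $J_\nu(1)$ and rewrite the equation as a transcendental equation in the Bessel ratio $J_{\nu+1}(1)/J_\nu(1)$, then invoke the strict monotonicity (decreasing) of $\nu\mapsto J_{\nu+1}(1)/J_\nu(1)$, which itself is a standard consequence of the Mittag-Leffler expansion of the Bessel quotient. This yields both uniqueness and the direction of the inequality, together with the sign of $P_\nu(1)-\alpha$ needed to match the denominator $2J_\nu(1)-J_{\nu+1}(1)$.

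The two numerical specializations then follow by direct substitution. Setting $\alpha=0$ recovers $(2\nu-4)J_{\nu+1}(1)+3J_\nu(1)=0$ with the unique root $\nu_0(h_\nu)\simeq-0.1438$ (located numerically). Setting $\alpha=\tfrac{3}{4}$ collapses the equation to $(2\nu-\tfrac{5}{2})J_{\nu+1}(1)=0$, and since $J_{\nu+1}(1)>0$ for all $\nu>-1$ we obtain $\nu=\tfrac{5}{4}$ at once. The main obstacle I anticipate is isolating a clean monotonicity statement for $\Phi$: the prefactor $2\nu+2\alpha-4$ itself changes sign in $\nu$, so the argument must combine monotonicity of the Bessel ratio with careful sign bookkeeping, and the reduction through $J_{\nu+1}(1)/J_\nu(1)$ is the key device that keeps the computation tractable.
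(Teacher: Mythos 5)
Your reduction to the boundary value $P_\nu(1)\geq\alpha$ and your derivation of the threshold equation via the recurrence $J_{\nu+2}(1)=2(\nu+1)J_{\nu+1}(1)-J_\nu(1)$ are exactly what the paper does (its proof of Theorem \ref{th6} reuses the expansion and the minimum argument from the proof of Theorem \ref{th3} at $r=1$). However, there are two genuine gaps. First, passing to $r=1$ is only legitimate if $\beta_{\nu,1}^2>1$, i.e. if the first pole of the Mittag-Leffler expansion lies outside $\overline{\mathbb{D}}$; otherwise the inequality \eqref{llll}/\eqref{kk} does not apply on all of $\mathbb{D}$ and the infimum of $\real P_\nu$ over $\mathbb{D}$ is not given by the value at $z=1$ (indeed $h_\nu'$ would vanish inside $\mathbb{D}$). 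The paper secures this with Lemma \ref{ccccccccccvvvvvvvvvvvv}, proving $\beta_{\nu,1}>1$ for $\nu\geq0$ from the Poisson-type integral representation \eqref{integrep}; your proposal never addresses it, and it is the reason the theorem restricts the root to $[0,\infty)$.

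Second, and more seriously, the uniqueness of the root and the direction of the inequality ($\nu\geq\nu_\alpha$) are exactly the point where your argument is not closed. You propose to divide by $J_\nu(1)$ and exploit monotonicity of $\rho(\nu)=J_{\nu+1}(1)/J_\nu(1)$, but as you yourself note, the coefficient $2\nu+2\alpha-4$ changes sign: writing $w(\nu)=1+\tfrac12\,\frac{(2\nu-2)\rho-1}{2-\rho}$, the partial derivative with respect to $\rho$ is $\tfrac{4\nu-5}{(2-\rho)^2}$, so for $\nu>\tfrac54$ the decrease of $\rho$ works \emph{against} the explicit increase in $\nu$, and one would need a quantitative bound on $\rho'(\nu)$ to conclude. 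You flag this as "the main obstacle" but do not resolve it, so the proof is incomplete as it stands. The paper sidesteps this entirely: it writes $w(\nu)=1-\sum_{n\geq1}1/(\beta_{\nu,n}^2-1)$ and invokes Landau's result (Lemma \ref{lemlandau}) that the Dini zeros $\nu\mapsto\beta_{\nu,n}$ of $(2-\nu)J_\nu(z)+zJ_\nu'(z)$ are strictly increasing, which makes every summand decreasing and $w$ strictly increasing with no sign bookkeeping at all. If you want to keep your route through the Bessel ratio, you must supply the missing estimate; otherwise, the monotonicity of the Dini zeros is the missing key lemma. The final specializations ($\alpha=0$ and $\alpha=\tfrac34$) are handled correctly and match the paper.
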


We note that the convex functions does not need to be normalized. In other words, the analytic and univalent function $f:\mathbb{D}\to\mathbb{C}$ satisfying $f'(0)\neq0$ is said to be convex of order $\alpha\in[0,1)$ if and only if
$$\real \left(1+z\frac{f''(z)}{f'(z)}\right)>\alpha$$ for all $z\in\mathbb{D}.$ In 1995 Selinger \cite{selinger} by using the method of differential subordinations proved that the function $\varphi_{\nu}:\mathbb{D}\to\mathbb{C},$ defined by $$\varphi_{\nu}(z)=\frac{h_{\nu}(z)}{z}=2^{\nu}\Gamma(\nu+1)z^{-\frac{\nu}{2}}J_{\nu}(\sqrt{z})=1-\frac{1}{4(\nu+1)}z+\dots,$$
is convex if $\nu\geq-\frac{1}{4}.$ In 2009 Sz\'asz and Kup\'an \cite{szasz2}, by using a completely different approach, improved this result, and proved that $\varphi_{\nu}$ is convex in $\mathbb{D}$ if $\nu\geq \nu_1\simeq -1.4069\dots,$ where $\nu_1$ is the root of the equation $4\nu^2+17\nu+16=0.$ Recently, Baricz and Ponnusamy \cite{samy} presented four improvements of the above result, and their best result was the following \cite[Theorem 3]{samy}: the function $\varphi_{\nu}$ is convex in $\mathbb{D}$ if $\nu\geq \nu_2\simeq-1.4373\dots,$ where $\nu_2$ is the unique root of the equation $2^{\nu}\Gamma(\nu+1)(I_{\nu+2}(1)+2I_{\nu+1}(1))=2.$ Moreover, Baricz and Ponnusamy \cite{samy} conjectured that $\varphi_{\nu}$ is convex in $\mathbb{D}$ if and only if $\nu\geq-1.875.$ Now, we are able to disprove this conjecture and to find the radius of convexity of the function $\varphi_{\nu}.$

\begin{Theorem}\label{th7}
If $\nu>-2$ and $\alpha\in[0,1),$  then the radius of
convexity of order $\alpha$ of the function $\varphi_{\nu}$ is the smallest  positive root of the equation
$$
 \frac{r^{\frac{1}{2}}J_{\nu}(r^{\frac{1}{2}})}{2J_{\nu+1}(r^{\frac{1}{2}})}-\nu=\alpha.
$$
Moreover, we have $r^c_{\alpha}(\varphi_{\nu})<j_{\nu+1,1}.$
\end{Theorem}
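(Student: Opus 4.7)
The plan is to follow the template of Theorems~\ref{th1}--\ref{th3}, adapted to the non-normalized setting $\varphi_\nu(0)=1$. First I would reparametrize via $w=\sqrt{z}$ and apply the formula $\frac{d}{dw}[w^{-\nu}J_{\nu}(w)]=-w^{-\nu}J_{\nu+1}(w)$ twice (with the chain rule $dw/dz=1/(2w)$) to obtain
$$\varphi_{\nu}'(z)=-\tfrac{1}{2}\cdot 2^{\nu}\Gamma(\nu+1)\,z^{-(\nu+1)/2}J_{\nu+1}(\sqrt{z}),\qquad \varphi_{\nu}''(z)=\tfrac{1}{4}\cdot 2^{\nu}\Gamma(\nu+1)\,z^{-(\nu+2)/2}J_{\nu+2}(\sqrt{z}),$$
which upon division give $\frac{z\varphi_{\nu}''(z)}{\varphi_{\nu}'(z)}=-\frac{\sqrt{z}}{2}\cdot\frac{J_{\nu+2}(\sqrt{z})}{J_{\nu+1}(\sqrt{z})}$. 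Invoking the three-term recurrence $J_{\nu+2}(w)=\frac{2(\nu+1)}{w}J_{\nu+1}(w)-J_{\nu}(w)$ rewrites this as
$$1+\frac{z\varphi_{\nu}''(z)}{\varphi_{\nu}'(z)}=\frac{\sqrt{z}\,J_{\nu}(\sqrt{z})}{2J_{\nu+1}(\sqrt{z})}-\nu,$$
so the equation of the theorem already appears at the level of the convexity expression.

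Next I would control the real part of this quantity on disks $|z|\le r$ via a Mittag-Leffler expansion. Since $\nu>-2$ gives $\nu+1>-1$, $J_{\nu+1}$ has only real positive zeros and the classical expansion $\frac{J_{\nu+2}(w)}{J_{\nu+1}(w)}=\sum_{n\ge 1}\frac{2w}{j_{\nu+1,n}^{2}-w^{2}}$ is available for $|w|<j_{\nu+1,1}$. Plugging it into the identity above gives the clean representation
$$1+\frac{z\varphi_{\nu}''(z)}{\varphi_{\nu}'(z)}=1-\sum_{n\ge 1}\frac{z}{j_{\nu+1,n}^{2}-z},\qquad |z|<j_{\nu+1,1}^{2}.$$
For $|z|\le r<j_{\nu+1,1}^{2}$ the elementary estimate $\real\frac{z}{a-z}\le\frac{r}{a-r}$ (valid for $a>r$) applied to each term yields
$$\real\left(1+\frac{z\varphi_{\nu}''(z)}{\varphi_{\nu}'(z)}\right)\ge 1-\sum_{n\ge 1}\frac{r}{j_{\nu+1,n}^{2}-r}=\frac{\sqrt{r}\,J_{\nu}(\sqrt{r})}{2J_{\nu+1}(\sqrt{r})}-\nu,$$
with equality at $z=r$.

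Finally I would observe that the right-hand side above, as a function of $r\in(0,j_{\nu+1,1}^{2})$, decreases strictly from $1$ (the limit as $r\to 0^{+}$, read off from the leading terms of $J_{\nu}$ and $J_{\nu+1}$) to $-\infty$ (as $r\uparrow j_{\nu+1,1}^{2}$), since each $r/(j_{\nu+1,n}^{2}-r)$ is strictly increasing. Hence for every $\alpha\in[0,1)$ the equation $\frac{\sqrt{r}J_{\nu}(\sqrt{r})}{2J_{\nu+1}(\sqrt{r})}-\nu=\alpha$ has a unique positive root $r_{\alpha}$, and the inequality together with equality at $z=r_{\alpha}$ identifies $r_{\alpha}$ as the sharp radius of convexity of order $\alpha$. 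The bound $r_{\alpha}<j_{\nu+1,1}$ then follows by a brief case split: if $j_{\nu+1,1}\le 1$, one uses $r_{\alpha}<j_{\nu+1,1}^{2}\le j_{\nu+1,1}$; otherwise one evaluates the real equation at $r=j_{\nu+1,1}<j_{\nu+1,1}^{2}$ and checks that the left-hand side is strictly less than $\alpha$ there.

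The main technical obstacle, as in Theorems~\ref{th1}--\ref{th3}, is justifying the Mittag-Leffler rearrangement (convergence and term-by-term manipulation near the pole for the full range $\nu>-2$) and promoting the circle-wise real-part inequality to sharpness on the open disk, which is handled by invoking the minimum principle for harmonic functions applied to $\real(1+z\varphi_{\nu}''(z)/\varphi_{\nu}'(z))-\alpha$, as signaled in the abstract.
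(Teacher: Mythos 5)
Your treatment of the main assertion coincides with the paper's: the same identity $1+z\varphi_{\nu}''(z)/\varphi_{\nu}'(z)=\frac{\sqrt{z}\,J_{\nu}(\sqrt{z})}{2J_{\nu+1}(\sqrt{z})}-\nu$, the same representation $1-\sum_{n\geq1}z/(j_{\nu+1,n}^{2}-z)$, the same term-by-term use of inequality \eqref{kk} with equality at $z=r>0$, and the same monotonicity argument producing a unique root. The only (harmless) differences are cosmetic: you reach the logarithmic derivative through $\frac{d}{dw}\left(w^{-\nu}J_{\nu}(w)\right)=-w^{-\nu}J_{\nu+1}(w)$ and the expansion of $J_{\nu+2}/J_{\nu+1}$, whereas the paper uses the Bessel differential equation, the recurrence \eqref{rec2} and the expansion of $wJ_{\nu+1}'/J_{\nu+1}$; also, no appeal to the minimum principle is needed here, since \eqref{kk} bounds the real part at every point of the disk directly, and the expansion you use is the classical one coming from the infinite product of $J_{\nu+1}$, so the residue-theorem machinery of Lemmas \ref{lem4} and \ref{lem5} is not required.

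The genuine gap is in your final step for the bound $r^{c}_{\alpha}(\varphi_{\nu})<j_{\nu+1,1}$. In your second case ($j_{\nu+1,1}>1$) the asserted check that the left-hand side is below $\alpha$ at $r=j_{\nu+1,1}$ fails in general: at that point the expression equals
$$1-\sum_{n\geq1}\frac{j_{\nu+1,1}}{j_{\nu+1,n}^{2}-j_{\nu+1,1}}\geq 1-\frac{j_{\nu+1,1}^{2}}{4(\nu+2)\left(j_{\nu+1,1}-1\right)},$$
by the Rayleigh sum $\sum_{n\geq1}j_{\nu+1,n}^{-2}=\frac{1}{4(\nu+2)},$ and the right-hand side tends to $\frac{3}{4}$ as $\nu\to\infty$ (e.g.\ for $\nu=10$, $\alpha=0$ it is about $0.65>0$), so the unique root then lies to the \emph{right} of $j_{\nu+1,1}$. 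What your argument actually proves — and what the paper's own proof establishes, since the relevant decreasing function blows down to $-\infty$ only as $r\nearrow j_{\nu+1,1}^{2}$, not as $r\nearrow j_{\nu+1,1}$ — is $r^{c}_{\alpha}(\varphi_{\nu})<j_{\nu+1,1}^{2}$, the location of the first singularity. The unsquared bound in the statement (and the interval $(0,j_{\nu+1,1})$ written in the paper's proof) should be read as $j_{\nu+1,1}^{2}$; state and prove the squared bound rather than attempting to force the unsquared one.
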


\begin{Theorem}\label{th8}
The function $\varphi_{\nu}$ is convex of order $\alpha\in[0,1)$ in $\mathbb{D}$ if and only if
$\nu\geq\nu_{\alpha}(\varphi_{\nu}),$ where $\nu_{\alpha}(\varphi_{\nu})$ is the unique  root of
the equation $$(2\nu+2\alpha)J_{\nu+1}(1)= J_{\nu}(1),$$ situated in $\left(\nu^{\star},\infty\right),$ where $\nu^{\star}\simeq-1.7744\dots$ is the root of the equation $J_{\nu+1}(1)=0.$ In particular, $\varphi_{\nu}$ is convex in $\mathbb{D}$ if and only if $\nu\geq \nu_0(\varphi_{\nu}),$ where $\nu_0(\varphi_{\nu})\simeq-1.5623\dots$ is the unique root of the equation $J_{\nu}(1)=2\nu J_{\nu+1}(1),$ situated in $\left(\nu^{\star},\infty\right).$
\end{Theorem}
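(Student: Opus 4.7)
My plan is to leverage Theorem~\ref{th7} to reduce convexity of $\varphi_\nu$ in $\mathbb{D}$ to a single scalar inequality in $\nu$, and then analyze that inequality through the Mittag--Leffler expansion of $J_\nu/J_{\nu+1}$. By the argument underlying Theorem~\ref{th7} (combining the expansion
$$\frac{J_\nu(w)}{J_{\nu+1}(w)} = \frac{2(\nu+1)}{w} - \sum_{k\geq 1}\frac{2w}{j_{\nu+1,k}^2-w^2}$$
with the standard estimate $\real(\zeta/(A-\zeta)) \leq |\zeta|/(A-|\zeta|)$ valid for $|\zeta| < A$), one has for $\nu > -2$ and $|\zeta| < j_{\nu+1,1}^2$ the identity
$$1+\frac{\zeta\varphi_\nu''(\zeta)}{\varphi_\nu'(\zeta)} = 1 - \sum_{k\geq 1}\frac{\zeta}{j_{\nu+1,k}^2-\zeta},$$
whose real part attains its minimum on $|\zeta|=r$ at the real point $\zeta = r$, giving
$$\Phi_\nu(r) := 1 - \sum_{k\geq 1}\frac{r}{j_{\nu+1,k}^2-r} = \frac{\sqrt{r}\,J_\nu(\sqrt{r})}{2J_{\nu+1}(\sqrt{r})}-\nu.$$
Since $\Phi_\nu$ is strictly decreasing in $r$ on $(0, j_{\nu+1,1}^2)$ and $j_{\nu+1,1} > 1$ iff $\nu > \nu^\star$, it follows that $\varphi_\nu$ is convex of order $\alpha$ in $\mathbb{D}$ if and only if $\nu > \nu^\star$ and $\Phi_\nu(1) \geq \alpha$; in view of $J_{\nu+1}(1) > 0$ on $(\nu^\star,\infty)$, this inequality is equivalent to $(2\nu+2\alpha)J_{\nu+1}(1) \leq J_\nu(1)$.

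I would then analyze $G(\nu) := \Phi_\nu(1) = 1 - \sum_{k\geq 1} 1/(j_{\nu+1,k}^2-1)$ on $(\nu^\star,\infty)$. The map $\nu \mapsto j_{\nu+1,k}$ is strictly increasing on $(-2,\infty)$ for every $k \geq 1$ (a classical consequence of the Sturm comparison theorem for Bessel's equation), so each term of the sum is strictly decreasing in $\nu$, and $G$ is strictly increasing. As $\nu \downarrow \nu^\star$, the first term blows up because $j_{\nu+1,1} \downarrow 1$, giving $G(\nu) \to -\infty$; as $\nu \to \infty$, every $j_{\nu+1,k}$ diverges, and dominated convergence yields $G(\nu) \to 1^-$. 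Hence $G$ is a bijection $(\nu^\star,\infty) \to (-\infty, 1)$, so for each $\alpha \in [0,1)$ the equation $(2\nu + 2\alpha)J_{\nu+1}(1) = J_\nu(1)$ has a unique root $\nu_\alpha(\varphi_\nu)$ in that interval, and convexity of order $\alpha$ of $\varphi_\nu$ in $\mathbb{D}$ is equivalent to $\nu \geq \nu_\alpha(\varphi_\nu)$.

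For $\nu \leq \nu^\star$ the argument closes separately: if $\nu < \nu^\star$, then $j_{\nu+1,1}^2 < 1$ is a zero of $\varphi_\nu'$ inside $\mathbb{D}$, so $\varphi_\nu$ fails to be univalent there; if $\nu = \nu^\star$, then $\Phi_\nu(r) \to -\infty$ as $r \to 1^-$, violating convexity of order $\alpha \geq 0$. Specializing to $\alpha = 0$ gives the last assertion about $\nu_0(\varphi_\nu)$, numerically disproving the Baricz--Ponnusamy conjecture. The main obstacle I foresee is not the monotonicity argument itself---which is clean once the Mittag--Leffler representation is available---but rather careful bookkeeping at the boundary $r = 1$, in particular ensuring that $\Phi_\nu(1) = \alpha$ still yields convexity of order $\alpha$ in the open disk (via the strict decrease of $\Phi_\nu$), together with the need to invoke the strict monotonicity of $\nu \mapsto j_{\nu+1,k}$ uniformly in $k$ down to the left endpoint $\nu^\star$.
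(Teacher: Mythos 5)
Your proposal is correct and follows essentially the same route as the paper: the Mittag--Leffler identity $1+z\varphi_{\nu}''(z)/\varphi_{\nu}'(z)=1-\sum_{n\geq1}z/(j_{\nu+1,n}^2-z)$ inherited from Theorem~\ref{th7}, the reduction of the infimum over $\mathbb{D}$ to the value at $z=1$ via inequality \eqref{kk}, and the strict increase in $\nu$ of $\phi(\nu)=J_{\nu}(1)/(2J_{\nu+1}(1))-\nu$ deduced from the monotonicity of $\nu\mapsto j_{\nu+1,n}$. The only substantive divergence is that you obtain $J_{\nu+1}(1)>0$ on $(\nu^{\star},\infty)$ directly from $j_{\nu^{\star}+1,1}=1$ and the monotonicity of the first zero, whereas the paper proves this positivity by a separate and much longer analysis of $\frac{d}{d\nu}\log J_{\nu}(1)$ using Rayleigh sums and digamma estimates (plus an alternative series argument); your shortcut is legitimate, and your explicit limits for $\phi$ and treatment of the range $\nu\leq\nu^{\star}$ supply details the paper leaves implicit.
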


\section{\bf Preliminary Results}
\setcounter{equation}{0}

This section is devoted to present some preliminary results, which will be used to prove the main theorems. Some of these preliminary results are well-known, however, Lemma \ref{lem4} and \ref{lem5} are quite new, and may be of independent interest.

\begin{lemma}\label{lem0}
If $a>b>0,$ $z\in\mathbb{C}$ and $\lambda\in[0,1],$ then for all $|z|<b$ we have
\begin{equation}\label{llll}\lambda\real\left(\frac{z}{a-z}\right)-\real\left(\frac{z}{b-z}\right)\geq\lambda\frac{|z|}{a-|z|}-\frac{|z|}{b-|z|}.\end{equation}
\end{lemma}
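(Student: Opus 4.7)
The plan is to rewrite the inequality in a form that calls for a monotonicity statement in one variable, and then to expose that monotonicity via a geometric series.

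Set $r=|z|$ and, for $t>r$, define
$$F(t) \;=\; \frac{r}{t-r} - \real\!\left(\frac{z}{t-z}\right).$$
The inequality to be proved is equivalent, after grouping the terms with the same denominator, to
$$F(b) \;\geq\; \lambda\,F(a),\qquad 0<r<b<a,\ \lambda\in[0,1].$$
So it would suffice to check two things: (i) $F(t)\geq 0$ on $(r,\infty)$, and (ii) $F$ is nonincreasing on $(r,\infty)$. Given these, the conclusion is immediate because $F(b)\geq F(a)\geq \lambda F(a)$.

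The key technical step is a series representation. Since $t>r=|z|$, the expansions
$$\frac{z}{t-z}=\sum_{n\geq 1}\frac{z^{n}}{t^{n}},\qquad \frac{r}{t-r}=\sum_{n\geq 1}\frac{r^{n}}{t^{n}}$$
converge absolutely, so taking real parts termwise gives
\begin{equation*}
F(t)=\sum_{n\geq 1}\frac{r^{n}-\real(z^{n})}{t^{n}}.
\end{equation*}
Because $|z^{n}|=r^{n}$, each numerator satisfies $r^{n}-\real(z^{n})\geq 0$. This delivers both (i) (every term is non-negative) and (ii) (each term is a non-negative constant divided by $t^{n}$, hence nonincreasing in $t>0$, so the sum is nonincreasing in $t$).

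There is no real obstacle: the only thing to be careful about is the rearrangement that turns the stated two-sided comparison into the single inequality $F(b)\geq\lambda F(a)$, and the justification that termwise real parts and termwise $t$-monotonicity are legitimate, which follows from absolute convergence for $t>r$. The case $z=0$ (and the boundary case $|z|=r$ with $z$ lying on the positive real axis, where $F\equiv 0$) can be handled by inspection, since then both sides of the target inequality vanish.
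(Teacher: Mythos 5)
Your proof is correct, and it follows the same overall decomposition as the paper: writing $F(t)=\frac{|z|}{t-|z|}-\real\bigl(\frac{z}{t-z}\bigr)$ (the negative of the paper's auxiliary function $u$), the claim is reduced in both cases to the two facts that $F$ is nonincreasing on $(|z|,\infty)$ and that $F\geq 0$, whence $F(b)\geq F(a)\geq\lambda F(a)$. Where you genuinely differ is in how these two facts are established. The paper proves monotonicity by writing $u(t)$ explicitly in terms of $x=\real z$ and $m=|z|$, differentiating, and massaging $u'(t)$ into the form $\frac{(m-x)(t+m)^2}{(t^2-2xt+m^2)^2}$ plus a positive remainder; the sign condition $F\geq 0$ is then imported as a separately cited inequality from an earlier paper of Sz\'asz. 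You instead expand both $\frac{z}{t-z}$ and $\frac{|z|}{t-|z|}$ as geometric series in $1/t$ (legitimate for $t>|z|$, with absolute convergence justifying termwise real parts) to get $F(t)=\sum_{n\geq 1}\bigl(|z|^n-\real(z^n)\bigr)t^{-n}$, and the single observation $\real(z^n)\leq|z|^n$ delivers both nonnegativity and $t$-monotonicity at once. Your route is self-contained (no external citation needed), avoids the derivative computation, and makes the equality case ($z$ on the positive real axis) transparent; the paper's route is more elementary in the sense of not invoking series rearrangement, and its strict inequality $u'(t)>0$ for $m>x$ gives slightly sharper information about when equality can occur.
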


\begin{proof}[\bf Proof]
Let us consider the function $u:[b,\infty)\rightarrow{\mathbb{R}},$ defined
by $$u(t)=\real\left(\frac{z}{t-z}\right)-\frac{|z|}{t-|z|}.$$ Simple
computations lead to
$$u(t)=\frac{tx-m^2}{t^2-2xt+m^2}-\frac{m}{t-m},$$
$$u'(t)=\frac{2tm^2-t^2x-xm^2}{(t^2-2xt+m^2)^2}+\frac{m}{(t-m)^2},$$
where $z=x+\mathrm{i}y$ and $|z|=m.$ Since for $t\geq b$ we have
$$u'(t)\geq\frac{2tm^2-t^2x-xm^2}{(t^2-2xt+m^2)^2}+\frac{m}{t^2-2xt+m^2}=\frac{(m-x)(t+m)^2}{(t^2-2xt+m^2)^2}>0,$$
it follows that $u$ is an increasing function, and consequently we get $u(a)\geq{u}(b),$ which is equivalent to
\begin{equation}\label{klkl}\real\left(\frac{z}{a-z}\right)-\frac{|z|}{a-|z|}\geq\real\left(\frac{z}{b-z}\right)-\frac{|z|}{b-|z|}.\end{equation}
On the other hand, it is known (see \cite{szasz}) that if $z\in{\mathbb{C}}$ and $\mu\in{\mathbb{R}}$ such that
$\mu>|z|$, then
\begin{equation}\label{kk}\frac{|z|}{\mu-|z|}\geq \real\left(\frac{z}{\mu-z}\right).\end{equation}
This in turn implies that
$$\lambda\left(\real\left(\frac{z}{a-z}\right)-\frac{|z|}{a-|z|}\right)\geq\real\left(\frac{z}{a-z}\right)-\frac{|z|}{a-|z|}.$$
and combining this with \eqref{klkl} we get \eqref{llll}.
\end{proof}

\begin{lemma}\label{lem1}\cite[p. 482]{watson}
If $\nu>-1$  and $a,b\in{\mathbb{R}},$ then $z\mapsto aJ_{\nu}(z)+bzJ'_{\nu}(z)$ has all its zeros real,
except the case when $a/b+\nu<0$. In this case it has
two purely imaginary zeros beside the real roots. Moreover, if $\nu>-1$ and $a,b\in\mathbb{R}$ such
that $a^2+b^2\neq0,$ then no function of the type
$z\mapsto aJ_\nu(z)+bzJ'_\nu(z)$ can have a repeated zero other than
$z=0.$
\end{lemma}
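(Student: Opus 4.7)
The plan is to reduce the function $F(z):=aJ_\nu(z)+bzJ_\nu'(z)$ to a Dini--type cross product and then exploit the classical Lommel orthogonality identity together with the Bessel ODE. Using the recurrence $zJ_\nu'(z)=\nu J_\nu(z)-zJ_{\nu+1}(z)$ one rewrites
$$F(z)=(a+b\nu)J_\nu(z)-bzJ_{\nu+1}(z),$$
so the case $b=0$ reduces to the classical fact that $J_\nu$ has only real simple zeros for $\nu>-1$. Thus we may assume $b\neq 0$ and set $c=a/b$.

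For the reality claim I would argue by contradiction: suppose $z_0=\xi+i\eta$ with $\xi\eta\neq 0$ is a zero. Apply the Lommel identity
$$(\alpha^2-\beta^2)\int_0^1 tJ_\nu(\alpha t)J_\nu(\beta t)\,dt=\beta J_\nu(\alpha)J_\nu'(\beta)-\alpha J_\nu'(\alpha)J_\nu(\beta)$$
with $\alpha=z_0$ and $\beta=\bar z_0$. The left side equals $4i\xi\eta\int_0^1 t|J_\nu(z_0 t)|^2\,dt$, purely imaginary and nonzero, while the right side, after using $z_0 J_\nu'(z_0)=-cJ_\nu(z_0)$ from $F(z_0)=0$ together with $J_\nu(\bar z_0)=\overline{J_\nu(z_0)}$, reduces to a real multiple of $|J_\nu(z_0)|^2$. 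Matching imaginary against real parts forces $J_\nu(z_0)=0$, hence also $J_\nu'(z_0)=0$, contradicting that $J_\nu$ has no repeated nonzero zeros. So all zeros of $F$ lie on $\mathbb{R}\cup i\mathbb{R}$. On the imaginary axis the identities $J_\nu(iy)=i^\nu I_\nu(y)$ and $J_{\nu+1}(iy)=i^{\nu+1}I_{\nu+1}(y)$ yield
$$F(iy)=i^\nu b\bigl[(c+\nu)I_\nu(y)+yI_{\nu+1}(y)\bigr],$$
so purely imaginary zeros $\pm iy$ appear precisely when $c+\nu=-yI_{\nu+1}(y)/I_\nu(y)$ admits a solution $y>0$. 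Since $y\mapsto yI_{\nu+1}(y)/I_\nu(y)$ is positive and strictly increasing from $0$ to $\infty$ on $(0,\infty)$ for $\nu>-1$ (for instance, via its Mittag-Leffler expansion), this occurs if and only if $c+\nu<0$, i.e.\ $a/b+\nu<0$, and the pair is unique.

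For simplicity of the nonzero zeros, suppose $F(z_0)=F'(z_0)=0$ with $z_0\neq 0$. Eliminating $J_\nu''(z_0)$ through the Bessel ODE recasts these equations as a linear system in $(J_\nu(z_0),J_\nu'(z_0))$ whose determinant is $a^2+b^2(z_0^2-\nu^2)$. A nonvanishing determinant forces $J_\nu(z_0)=J_\nu'(z_0)=0$, impossible at $z_0\neq 0$. Otherwise $z_0^2=\nu^2-c^2$ and $J_\nu'(z_0)=-cJ_\nu(z_0)/z_0$; substituting into the coincident-argument Lommel identity
$$\int_0^1 tJ_\nu(z_0 t)^2\,dt=\tfrac12\bigl[(J_\nu'(z_0))^2+(1-\nu^2/z_0^2)J_\nu(z_0)^2\bigr]$$
collapses the right side to $(z_0^2+c^2-\nu^2)J_\nu(z_0)^2/(2z_0^2)=0$, while the left side is nonzero for $z_0\in\mathbb{R}\setminus\{0\}$ (positive integrand) and for $z_0\in i\mathbb{R}\setminus\{0\}$ (the integral has modulus $\int_0^1 tI_\nu(|z_0|t)^2\,dt>0$). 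This contradiction completes the proof.

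The hard part is the simplicity step: eliminating the exceptional locus $z_0^2=\nu^2-c^2$, where the plain linear-algebra argument stalls. The coincident-argument Lommel identity supplies the missing ingredient, but requires one to verify nonvanishing of the integral separately on the real and on the imaginary axes. A secondary technicality is the strict monotonicity of $yI_{\nu+1}(y)/I_\nu(y)$ used to count and locate the imaginary-axis zeros.
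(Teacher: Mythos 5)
The paper does not prove this lemma at all --- it is quoted directly from Watson \cite[p.~482]{watson} as a known classical result --- so there is no in-paper argument to compare against; your proof is correct and is essentially the classical one from that source: Lommel's orthogonality integral excludes zeros off $\mathbb{R}\cup i\mathbb{R}$, the reduction $F(iy)=i^{\nu}b\bigl[(c+\nu)I_{\nu}(y)+yI_{\nu+1}(y)\bigr]$ together with the monotonicity of $y\mapsto yI_{\nu+1}(y)/I_{\nu}(y)$ locates the unique imaginary pair exactly when $a/b+\nu<0$, and the coincident-argument Lommel integral disposes of the degenerate locus $a^{2}+b^{2}(z_0^{2}-\nu^{2})=0$ in the simplicity step. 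One cosmetic remark: in the reality step the right-hand side $\bar z_0 J_{\nu}(z_0)J_{\nu}'(\bar z_0)-z_0J_{\nu}'(z_0)J_{\nu}(\bar z_0)$ in fact collapses to $0$ (the two terms $\mp c|J_{\nu}(z_0)|^{2}$ cancel), so comparing imaginary parts already forces $\int_0^1 t|J_{\nu}(z_0t)|^{2}\,dt=0$ and hence the contradiction, without the detour through $J_{\nu}(z_0)=0$.
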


\begin{lemma}\label{lem3}\cite[p. 198]{watson}
If $\nu>-1,$ then for the Bessel functions of the third kind $H_\nu^{(1)}$ and $H_\nu^{(2)}$
the following asymptotic expansions are valid
$$H_\nu^{(1)}(w)=\left(\frac{2}{\pi{w}}\right)^{\frac{1}{2}}e^{\mathrm{i}(w-\frac{1}{2}\nu\pi-\frac{1}{4}\pi)}(1+\eta_{1,\nu}(w)),$$
$$H_\nu^{(2)}(w)=\left(\frac{2}{\pi{w}}\right)^{\frac{1}{2}}e^{-\mathrm{i}(w-\frac{1}{2}\nu\pi-\frac{1}{4}\pi)}(1+\eta_{2,\nu}(w)),$$
where $\eta_{1,\nu}(w)$ and $\eta_{2,\nu}(w)$ are $\mathcal{O}(1/w)$ when $|w|$ is large.
\end{lemma}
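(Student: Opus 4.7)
The plan is to apply the method of steepest descent to a contour integral representation of the Hankel functions. First I would start from the Hankel--Schl\"afli integral
$$H_\nu^{(1)}(w)=\frac{1}{\pi \mathrm{i}}\int_{C_1} e^{w\sinh t-\nu t}\,dt,$$
where $C_1$ runs from $-\infty$ to $-\infty+\mathrm{i}\pi$ and passes through the saddle point $t=\mathrm{i}\pi/2$ of $\sinh t$, together with the analogous representation for $H_\nu^{(2)}$ on the reflected contour $C_2$ from $-\infty-\mathrm{i}\pi$ to $-\infty$. Both representations are valid under the hypothesis $\nu>-1$ (and in fact more broadly), so no additional work is required to set them up.

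Next I would localize the integrals at the saddle: setting $t=\mathrm{i}\pi/2+s$ converts $\sinh t$ into $\mathrm{i}\cosh s$, so the exponent becomes $\mathrm{i}w\cosh s-\nu(\mathrm{i}\pi/2+s)$. I would then deform onto the steepest-descent ray $s=e^{-\mathrm{i}\pi/4}\tau$, $\tau\in\mathbb{R}$, on which the quadratic part of the exponent becomes the real Gaussian $-w\tau^2/2$. The Gaussian integral contributes $\sqrt{2\pi/w}$, and combining it with the prefactor $e^{\mathrm{i}w-\mathrm{i}\nu\pi/2-\mathrm{i}\pi/4}$ produced by the parametrization yields the announced leading term $\left(\frac{2}{\pi w}\right)^{1/2}e^{\mathrm{i}(w-\nu\pi/2-\pi/4)}$.

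To extract the error term, I would expand $\cosh s=1+s^2/2+s^4/24+\dots$ and apply Watson's lemma to the resulting asymptotic series in $1/w$; the first omitted contribution is $\mathcal{O}(1/w)$ uniformly in a sector of $w$, which gives the factor $1+\eta_{1,\nu}(w)$ with $\eta_{1,\nu}(w)=\mathcal{O}(1/w)$. The same argument applied on $C_2$, or the conjugation symmetry $H_\nu^{(2)}(\overline{w})=\overline{H_\nu^{(1)}(w)}$ for real $\nu$ followed by analytic continuation, yields the second expansion with $\eta_{2,\nu}(w)=\mathcal{O}(1/w)$.

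The main obstacle in carrying out this plan is the careful justification of the deformation from $C_1$ (or $C_2$) onto the steepest-descent ray: one has to check that the integrand decays fast enough as $t\to-\infty$ on both ends and on the joining arcs, and that the tail of the steepest-descent integral away from the saddle is absorbed into the $\mathcal{O}(1/w)$ error with a constant depending only on $\nu$. Once a uniform estimate on the remainder $\cosh s-1-s^2/2$ is in hand, Watson's lemma produces the expansion in a routine way.
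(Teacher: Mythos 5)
The paper does not prove this lemma at all: it is quoted verbatim from Watson's treatise (p.~198), where it is derived not by steepest descent but from Hankel's integral of Poisson type, expanding $\bigl(1+\tfrac{\mathrm{i}u}{2w}\bigr)^{\nu-\frac{1}{2}}$ under the integral sign with an explicit remainder. So your route is a genuinely different (and equally classical) one, and in principle it works; but as written your sketch contains two concrete errors and leaves the substantive analysis undone. First, the contour you give is wrong: the Schl\"afli--Sommerfeld representation of $H_\nu^{(1)}$ is
$$H_\nu^{(1)}(w)=\frac{1}{\pi\mathrm{i}}\int_{-\infty}^{+\infty+\mathrm{i}\pi}e^{w\sinh t-\nu t}\,dt ,$$
with the contour ending at $+\infty+\mathrm{i}\pi$, not at $-\infty+\mathrm{i}\pi$; on the ray $\operatorname{Im}t=\pi$, $\operatorname{Re}t\to-\infty$ one has $\sinh t\to+\infty$, so the integral you wrote down diverges for $\operatorname{Re}w>0$. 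Second, after setting $t=\mathrm{i}\pi/2+s$ the exponent near the saddle is $\mathrm{i}w(1+s^2/2+\cdots)$, so the steepest-\emph{descent} ray is $s=e^{\mathrm{i}\pi/4}\tau$ (giving $e^{-w\tau^2/2}$), not $s=e^{-\mathrm{i}\pi/4}\tau$, which is the steepest-ascent direction and would also produce the wrong constant phase $e^{-3\mathrm{i}\pi/4}$ in place of $e^{\mathrm{i}\pi/4}/\mathrm{i}=e^{-\mathrm{i}\pi/4}$.

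Beyond these slips, the part you yourself flag as ``the main obstacle'' --- justifying the deformation onto the steepest-descent path, controlling the tails away from the saddle, and obtaining a remainder bound uniform in the sector of $w$ actually needed (here $|\arg w|\le\pi/2$, on the boundary of large rectangles) --- is precisely the content of the proof, so the proposal is a plan rather than an argument. Two smaller remarks: the hypothesis $\nu>-1$ plays no role (the expansions hold for all real $\nu$, as your own appeal to the reflection $H_\nu^{(2)}(\overline{w})=\overline{H_\nu^{(1)}(w)}$ implicitly uses), and for the purposes of this paper nothing more than the quoted statement is required, which is why the authors simply cite Watson.
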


\begin{lemma}\label{lem4}
Let $z\in\mathbb{C}$, and $\alpha_{\nu,n}$ be the $n$th positive root of the
equation $J_{\nu}(z)-zJ_{\nu+1}(z)=0.$ If $\nu>-1,$ then the
following development holds
\begin{equation}\label{sxv}\frac{g_\nu''(z)}{g_\nu'(z)}=
\frac{zJ_{\nu+2}(z)-3J_{\nu+1}(z)}{J_{\nu}(z)-zJ_{\nu+1}(z)}=
-\sum_{n\geq1}\frac{2z}{\alpha_{\nu,n}^2-z^2}.\end{equation}
\end{lemma}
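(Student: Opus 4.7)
The plan is to identify the Mittag-Leffler expansion as the outcome of a contour-integration of the logarithmic derivative of $g_\nu'$ over expanding circles, using Hankel asymptotics (Lemma \ref{lem3}) to kill the boundary integral.

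\medskip

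\textbf{Step 1 (closed forms for the derivatives).} Starting from $g_\nu(z)=2^{\nu}\Gamma(\nu+1)z^{1-\nu}J_\nu(z)$, I would use $J_\nu'(z)=\frac{\nu}{z}J_\nu(z)-J_{\nu+1}(z)$ to obtain
$$g_\nu'(z)=2^{\nu}\Gamma(\nu+1)\,z^{-\nu}\bigl[J_\nu(z)-zJ_{\nu+1}(z)\bigr],$$
and, differentiating once more and applying $J_{\nu+1}'(z)=J_\nu(z)-\frac{\nu+1}{z}J_{\nu+1}(z)$,
$$g_\nu''(z)=2^{\nu}\Gamma(\nu+1)\,z^{-\nu}\bigl[(2\nu-1)J_{\nu+1}(z)-zJ_\nu(z)\bigr].$$
The first equality of the lemma then follows from the three-term recurrence $zJ_{\nu+2}(z)=2(\nu+1)J_{\nu+1}(z)-zJ_\nu(z)$, which rewrites $(2\nu-1)J_{\nu+1}(z)-zJ_\nu(z)$ as $zJ_{\nu+2}(z)-3J_{\nu+1}(z)$.

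\medskip

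\textbf{Step 2 (zero structure of $g_\nu'$).} Rewriting $J_\nu(z)-zJ_{\nu+1}(z)=(1-\nu)J_\nu(z)+zJ_\nu'(z)$, I would invoke Lemma \ref{lem1} with $a=1-\nu$, $b=1$: here $a/b+\nu=1>0$, so all zeros are real and simple. The series expansion shows that $z^{-\nu}[J_\nu(z)-zJ_{\nu+1}(z)]$ is an even entire function not vanishing at the origin, so the nonzero zeros occur in pairs $\pm\alpha_{\nu,n}$. Consequently the meromorphic function $h(\zeta):=g_\nu''(\zeta)/g_\nu'(\zeta)$ has simple poles exactly at $\pm\alpha_{\nu,n}$ with residue $1$, and $h(0)=0$ because the expansion $g_\nu(z)=z-\frac{z^3}{4(\nu+1)}+\cdots$ forces $g_\nu''(0)=0$.

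\medskip

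\textbf{Step 3 (residue calculus).} Fix $z$ outside $\{0,\pm\alpha_{\nu,n}\}$ and consider the auxiliary function $\Phi(\zeta)=\dfrac{h(\zeta)}{\zeta(\zeta-z)}$. Choose $R_k=\tfrac12(\alpha_{\nu,N_k}+\alpha_{\nu,N_k+1})$, so the circles $|\zeta|=R_k$ stay away from every pole. The residue at $\zeta=z$ is $h(z)/z$, the pole at $\zeta=0$ is removed since $h(0)=0$, and the residues at $\pm\alpha_{\nu,n}$ combine as
$$\frac{1}{\alpha_{\nu,n}(\alpha_{\nu,n}-z)}+\frac{1}{\alpha_{\nu,n}(\alpha_{\nu,n}+z)}=\frac{2}{\alpha_{\nu,n}^{2}-z^{2}}.$$
The residue theorem therefore gives
$$\frac{1}{2\pi i}\oint_{|\zeta|=R_{k}}\Phi(\zeta)\,d\zeta=\frac{h(z)}{z}+\sum_{n=1}^{N_{k}}\frac{2}{\alpha_{\nu,n}^{2}-z^{2}}.$$

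\medskip

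\textbf{Step 4 (vanishing of the boundary integral).} This is the main obstacle. Writing $J_\nu=\tfrac12(H_\nu^{(1)}+H_\nu^{(2)})$ and invoking the asymptotic expansions of Lemma \ref{lem3}, one shows that, on the carefully chosen circles $|\zeta|=R_k$, the leading behaviour of $h(\zeta)$ is given by a bounded cotangent-type expression in $\zeta$, so there is a constant $M$ (independent of $k$) with $|h(\zeta)|\le M$ on $|\zeta|=R_k$ for all sufficiently large $k$. Then
$$\Bigl|\oint_{|\zeta|=R_{k}}\Phi(\zeta)\,d\zeta\Bigr|\le\frac{2\pi R_{k}\cdot M}{R_{k}(R_{k}-|z|)}=\frac{2\pi M}{R_{k}-|z|}\longrightarrow 0.$$
Passing to the limit in the residue identity and multiplying by $z$ yields $h(z)=-\sum_{n\ge1}\frac{2z}{\alpha_{\nu,n}^{2}-z^{2}}$, as claimed. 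The delicate part of the estimate is verifying that $R_k$ can be chosen so that the denominator $J_\nu(\zeta)-\zeta J_{\nu+1}(\zeta)$ stays uniformly bounded below in modulus on these circles; the interlacing of zeros of Bessel functions together with the Hankel expansions handles this.
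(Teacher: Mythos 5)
Your proposal follows essentially the same route as the paper: apply the residue theorem to $h(w)/\bigl(w(w-z)\bigr)$ over expanding contours, use Lemma \ref{lem1} to see that the poles are exactly the real simple zeros $\pm\alpha_{\nu,n}$ (with $w=0$ removable), and invoke the Hankel asymptotics of Lemma \ref{lem3} to show the boundary integral vanishes. The only cosmetic difference is your choice of circles through midpoints of consecutive zeros where the paper uses rectangles whose vertical sides sit at $a_m=2m\pi+\tfrac{1}{2}(\nu+1)\pi+\tfrac{3}{4}\pi$ and whose horizontal sides recede independently; the uniform boundedness of $h$ on the contour, which you correctly flag as the delicate point, is exactly what the paper's limit computations with $p_\nu/q_\nu\to\mathrm{i}$ establish.
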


\begin{proof}[\bf Proof]
Let us consider the integral
$$\frac{1}{2\pi\mathrm{i}}\int_{\mathbb{U}}\frac{z}{w(w-z)}\frac{wJ_{\nu+2}(w)-3J_{\nu+1}(w)}{J_{\nu}(w)-wJ_{\nu+1}(w)}dw$$
where $\mathbb{U}$ is the rectangle, whose vertices  are $\pm{a}\pm{b\mathrm{i}},$ $a>0,$ $b>0$ and $z$ is a point inside the rectangle $\mathbb{U}$ other
than a zero of $z\mapsto J_{\nu}(z)-zJ_{\nu+1}(z).$ Suppose that inside of $\mathbb{U}$ there are $m$ positive and $m$ negative roots of $z\mapsto J_{\nu}(z)-zJ_{\nu+1}(z).$ Since \cite[p. 222]{nist} $(1-\nu)J_\nu(z)+zJ'_\nu(z)=J_\nu(z)-zJ_{\nu+1}(z),$ according to Lemma \ref{lem1} the zeros of $z\mapsto J_{\nu}(z)-zJ_{\nu+1}(z)$ are simple and real. The point $w=0$ is a removable singularity. The only poles of the above integrand inside the rectangle are $z,$ $\pm \alpha_{\nu,1},$ $\pm \alpha_{\nu,2},$ $\dots,$ $\pm \alpha_{\nu,m}.$ The residue at $z$ is
$$\varphi_{\nu}(z)=\frac{zJ_{\nu+2}(z)-3J_{\nu+1}(z)}{J_{\nu}(z)-zJ_{\nu+1}(z)},$$
while the residues at $\pm \alpha_{\nu,n}$ are $$\frac{z}{\alpha_{\nu,n}(\alpha_{\nu,n}\mp z)}.$$
Here we used the recurrence relation \cite[p. 222]{nist}
$$zJ_{\nu}'(z)=-zJ_{\nu+1}(z)+\nu J_{\nu}(z)$$
for $\nu$ and $\nu+1.$ The residue theorem \cite[p. 19]{nist} implies that
\begin{align*}\frac{1}{2\pi\mathrm{i}}&\int_{\mathbb{U}}\frac{z}{w(w-z)}\frac{wJ_{\nu+2}(w)-3J_{\nu+1}(w)}{J_{\nu}(w)-wJ_{\nu+1}(w)}dw\\&=
\frac{zJ_{\nu+2}(z)-3J_{\nu+1}(z)}{J_{\nu}(z)-zJ_{\nu+1}(z)}+\sum_{n=1}^m\frac{z}{\alpha_{\nu,n}(\alpha_{\nu,n}-z)}+
\sum_{n=1}^m\frac{z}{\alpha_{\nu,n}(\alpha_{\nu,n}+z)}\\&=\frac{zJ_{\nu+2}(z)-3J_{\nu+1}(z)}{J_{\nu}(z)-zJ_{\nu+1}(z)}+
\sum_{n=1}^m\frac{2z}{\alpha_{\nu,n}^2-z^2}.\end{align*} In what follows we show that $a$ and $b$ can be replaced by suitable
sequences which increase without limit and in the same time the
expression $\varphi_{\nu}(w)$ remains bounded if $w\in\mathbf{U}.$
Since the function $w\mapsto \varphi_{\nu}(w)$ is
an odd function of $w,$  it is sufficient to consider the case
$\real{w}>0.$ Now, we introduce the notations
$$H_{\nu+k}^{(j)}(w)=\left(\frac{2}{\pi{w}}\right)^{\frac{1}{2}}e^{(-1)^{j-1}\mathrm{i}(w-\frac{1}{2}(\nu+k)\pi-\frac{1}{4}\pi)}(1+\eta_{k,j,\nu}(w)),$$
where, as in Lemma \ref{lem3}, for $k\in\{0,1,2\}$ and $j\in\{1,2\}$ the expression $\eta_{k,j,\nu}(w)$ is $\mathcal{O}(1/w)$ when $|w|$ is large. The relation $2J_\nu(w)=H_\nu^{(1)}(w)+H_\nu^{(2)}(w)$ and Lemma \ref{lem3}
 lead to $\varphi_{\nu}(w)={p_{\nu}(w)}/{q_{\nu}(w)},$ where
\begin{eqnarray}p_{\nu}(w)=(-\mathrm{i})e^{2\mathrm{i}(w-\frac{1}{2}(\nu+1)\pi-\frac{1}{4}\pi)}(1+\eta_{2,1,\nu}(w))+\mathrm{i}(1+\eta_{2,2,\nu}(w))\nonumber\\
-\frac{3}{w}\left[e^{2\mathrm{i}(w-\frac{1}{2}(\nu+1)\pi-\frac{1}{4}\pi)}(1+\eta_{1,1,\nu}(w))
+(1+\eta_{1,2,\nu}(w))\right] \nonumber \end{eqnarray} and
\begin{eqnarray}q_{\nu}(w)=\frac{1}{w}\left[\mathrm{i}e^{2\mathrm{i}(w-\frac{1}{2}(\nu+1)\pi-\frac{1}{4}\pi)}(1+\eta_{0,1,\nu}(w)) -\mathrm{i}(1+\eta_{0,2,\nu}(w))\right]\nonumber\\
-e^{2\mathrm{i}(w-\frac{1}{2}(\nu+1)\pi-\frac{1}{4}\pi)}(1+\eta_{1,1,\nu}(w))
+(1+\eta_{1,2,\nu}(w)).\nonumber \end{eqnarray}
Since $\varphi_{\nu}(x+\mathrm{i}b)$ tends to $\mathrm{i}$ as $b\to\infty$ and the
convergence is uniform with respect to $x\in\mathbb{R},$ it follows that $\varphi_{\nu}(w)$ is
bounded on $\left\{x+\mathrm{i}b|x\in[0,a]\right\}$ if $b$ is large enough. An
analogous argument shows that $\varphi_{\nu}(w)$ is
bounded on the segment $\left\{x-\mathrm{i}b|x\in[0,a]\right\}.$ It remains to prove
that $\varphi_{\nu}(w)$ is bounded in the case when $w\in\left\{a+\mathrm{i}y|y\in[-b,b]\right\}.$ In this case
we put $a=a_m=2m\pi+\frac{1}{2}(\nu+1)+\frac{3}{4}\pi$ and we get
$$\lim_{m\rightarrow\infty}\frac{p_{\nu}(2m\pi+\frac{1}{2}(\nu+1)+\frac{3}{4}\pi+\mathrm{i}y)}
{q_{\nu}(2m\pi+\frac{1}{2}(\nu+1)+\frac{3}{4}\pi+\mathrm{i}y)}=\frac{\mathrm{i}(1+e^{-2y})}{1+e^{-2y}}=\mathrm{i},$$
and thus it follows that
$\varphi_{\nu}(w)$ is bounded on the segment $\left\{a+\mathrm{i}y|y\in[-b,b]\right\},$ if $a_m$ is
large enough. Consequently $\varphi_{\nu}(w)$ is bounded on the perimeter of the rectangle $\mathbb{U}$ if $b$ and $a_m$
tend to infinity. Thus it follows that
$$\lim\sb{\substack{{b\rightarrow\infty}\\
{m\rightarrow\infty}}}\frac{1}{2\pi\mathrm{i}}\int_{\mathbb{U}}\frac{z}{w(w-z)}\frac{wJ_{\nu+2}(w)-3J_{\nu+1}(w)}{J_{\nu}(w)-wJ_{\nu+1}(w)}dw
=0,$$ which implies that (\ref{sxv}) is indeed valid.
\end{proof}

The proof of the next lemma is quite similar to that of the proof of Lemma \ref{lem4}. However, for the sake of completeness we have included also in details the proof of the following lemma. As far as we know the results presented in Lemma \ref{lem4} and \ref{lem5} are new. These results may be of independent interest and we believe that can be used to obtain some new inequalities for Bessel functions of the first kind of real or complex variable and real order.

\begin{lemma}\label{lem5}
Let $z\in\mathbb{C},$ and let $\beta_{\nu,n}$ be the $n$th positive root of $(2-\nu)J_\nu(z)+zJ_\nu'(z)=0.$ If $\nu>-1,$  then
the following development holds
$$\frac{\nu(\nu-2)J_\nu(z^{\frac{1}{2}})+(3-2\nu)z^{\frac{1}{2}}J_\nu'(z^{\frac{1}{2}})+zJ_\nu''(z^{\frac{1}{2}})}
{2(2-\nu)J_\nu(z^{\frac{1}{2}})+2z^{\frac{1}{2}}J_\nu'(z^{\frac{1}{2}})}=-\sum_{n\geq1}\frac{z}{\beta_{\nu,n}^2-z}.$$
\end{lemma}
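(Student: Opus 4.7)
I would mirror the residue-theorem strategy of Lemma \ref{lem4}, carried out in the variable $u=\sqrt{w}$. First, using Bessel's equation $u^2J_\nu''(u)+uJ_\nu'(u)+(u^2-\nu^2)J_\nu(u)=0$ at $u=z^{1/2}$ to eliminate $J_\nu''(z^{1/2})$, together with the recurrence $uJ_{\nu+1}(u)=\nu J_\nu(u)-uJ_\nu'(u)$, the left-hand side collapses to
$$\psi_\nu(z):=\frac{-zJ_\nu(z^{1/2})+2(\nu-1)z^{1/2}J_{\nu+1}(z^{1/2})}{2\bigl[2J_\nu(z^{1/2})-z^{1/2}J_{\nu+1}(z^{1/2})\bigr]}.$$
Numerator and denominator share the branch factor $z^{\nu/2}$, so $\psi_\nu$ is single-valued meromorphic on $\mathbb{C}$. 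Since $2J_\nu(u)-uJ_{\nu+1}(u)=(2-\nu)J_\nu(u)+uJ_\nu'(u)$ satisfies $a/b+\nu=2>0$, Lemma \ref{lem1} guarantees only simple real zeros; consequently $\psi_\nu$ has simple poles exactly at $z=\beta_{\nu,n}^2$, and $z=0$ is removable. Writing $\Psi_\nu(u):=\psi_\nu(u^2)$ produces an even meromorphic function on $\mathbb{C}$ with simple poles at $\pm\beta_{\nu,n}$ and a double zero at the origin.

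Next, I would apply the residue theorem in the $u$-plane to
$$I(a,b):=\frac{1}{2\pi\mathrm{i}}\int_{\partial\mathbb{U}}\frac{2z\,\Psi_\nu(u)}{u(u^2-z)}\,du,$$
where $\mathbb{U}$ is the rectangle with vertices $\pm a\pm b\mathrm{i}$ chosen to enclose $\pm z^{1/2}$ and $\pm\beta_{\nu,1},\ldots,\pm\beta_{\nu,m}$ and no other poles. The pair $\pm z^{1/2}$ contributes residue $\psi_\nu(z)$ each, totalling $2\psi_\nu(z)$. At $u_0=\beta_{\nu,n}$ the defining equation rewrites as $u_0J_{\nu+1}(u_0)=2J_\nu(u_0)$; combining this with $uJ_{\nu+2}(u)=2(\nu+1)J_{\nu+1}(u)-uJ_\nu(u)$, a short computation yields both $N(u_0)=[4(\nu-1)-\beta_{\nu,n}^2]J_\nu(\beta_{\nu,n})$ and $\tfrac12 u_0 D'(u_0)=[4(\nu-1)-\beta_{\nu,n}^2]J_\nu(\beta_{\nu,n})$ (note Lemma \ref{lem1} forces these factors to be nonzero, for otherwise $D$ would have a repeated zero), whence $\operatorname{Res}_{u=\beta_{\nu,n}}\Psi_\nu=\beta_{\nu,n}/2$. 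By evenness $\operatorname{Res}_{u=-\beta_{\nu,n}}\Psi_\nu=-\beta_{\nu,n}/2$, and together the pair $\pm\beta_{\nu,n}$ contributes $2z/(\beta_{\nu,n}^2-z)$. The origin gives no residue because $\Psi_\nu$ vanishes to order $2$ there. Summing,
$$I(a,b)=2\psi_\nu(z)+\sum_{n=1}^{m}\frac{2z}{\beta_{\nu,n}^{2}-z}.$$

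The step I expect to be the main obstacle is showing that $I(a,b)\to0$ along suitable sequences $a=a_m\to\infty$ and $b\to\infty$. The argument is a direct adaptation of the asymptotic analysis of Lemma \ref{lem4}: substituting $2J_\lambda(u)=H_\lambda^{(1)}(u)+H_\lambda^{(2)}(u)$ for $\lambda\in\{\nu,\nu+1,\nu+2\}$ and the Hankel expansions of Lemma \ref{lem3} into $\Psi_\nu$, one obtains $\Psi_\nu(u)=p_\nu(u)/q_\nu(u)$, where $p_\nu$, $q_\nu$ are linear combinations of $1$ and $e^{\pm 2\mathrm{i}u}$ perturbed by $\mathcal{O}(1/u)$ remainders. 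On the horizontal sides $|\operatorname{Im} u|=b$, letting $b\to\infty$ makes one of $e^{\pm 2\mathrm{i}u}$ dominate in both $p_\nu$ and $q_\nu$, so their quotient tends to a constant uniformly in $\operatorname{Re} u$; on the vertical sides one chooses $a_m=2m\pi+\tfrac12(\nu+1)\pi+\tfrac34\pi$ — exactly the choice used in Lemma \ref{lem4} — so that $e^{2\mathrm{i}u}$ avoids its critical phases and $\Psi_\nu$ remains bounded uniformly in $\operatorname{Im} u\in[-b,b]$. Combined with the $\mathcal{O}(|u|^{-3})$ factor $2z/[u(u^2-z)]$, the perimeter integral satisfies $I(a_m,b)=\mathcal{O}(1/a_m+1/b)$. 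Dividing the resulting identity by $2$ yields
$$\psi_\nu(z)=-\sum_{n\geq1}\frac{z}{\beta_{\nu,n}^{2}-z},$$
which is equivalent to the claimed Mittag-Leffler expansion.
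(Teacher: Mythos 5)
Your proposal is correct and follows essentially the same route as the paper: reduce the quotient via Bessel's equation and the recurrences to a ratio of Dini-type functions, then apply the residue theorem over expanding rectangles with the Hankel-function asymptotics of Lemma \ref{lem3} and the choice $a_m=2m\pi+\tfrac12(\nu+1)\pi+\tfrac34\pi$ to kill the contour integral. The only (harmless) difference is cosmetic: the paper first proves the identity in the unsquared variable with the kernel $\tfrac{z}{w(w-z)}$ against the odd function $\tfrac{wJ_{\nu+2}(w)-4J_{\nu+1}(w)}{2J_\nu(w)-wJ_{\nu+1}(w)}$ and then substitutes $z\mapsto z^{1/2}$, whereas you integrate the even function $\Psi_\nu$ against $\tfrac{2z}{u(u^2-z)}$ and obtain the squared-variable expansion directly; your residue computation $\operatorname{Res}_{u=\beta_{\nu,n}}\Psi_\nu=\beta_{\nu,n}/2$ checks out.
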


\begin{proof}[\bf Proof]
We prove first the next development formula
$$\frac{\nu(\nu-2)J_\nu(z)+(3-2\nu)zJ_\nu'(z)+z^2J_\nu''(z)}{2(2-\nu)J_\nu(z)+2zJ_\nu'(z)}=-\sum_{n\geq1}\frac{z^2}{\beta_{\nu,n}^2-z^2}.$$
The recurrence formula \cite[p. 222]{nist} $zJ'_\nu(z)=\nu{J_\nu(z)}-zJ_{\nu+1}(z)$
implies
$$(2-\nu)J_\nu(z)+zJ_\nu'(z)=2J_{\nu}(z)-zJ_{\nu+1}(z)$$
and
$$\frac{\nu(\nu-2)J_\nu(z)+(3-2\nu)zJ_\nu'(z)+z^2J_\nu''(z)}{2(2-\nu)J_\nu(z)+2zJ_\nu'(z)}=\frac{z}{2}\cdot\frac{zJ_{\nu+2}(z)-4J_{\nu+1}(z)}{2J_{\nu}(z)-zJ_{\nu+1}(z)}.$$
We consider the integral
$$\frac{1}{2\pi{i}}\int_{\mathbb{O}}\frac{z}{w(w-z)}\frac{wJ_{\nu+2}(w)-4J_{\nu+1}(w)}{2J_{\nu}(w)-wJ_{\nu+1}(w)}dw$$
where $\mathbb{O}$ is the rectangle, whose vertices  are  $\pm{A}\pm{Bi} \
(A>0, \ B>0)$ and  $z$ is a point inside the rectangle $\mathbb{O}$ other
than a zero of $2J_{\nu}(z)-zJ_{\nu+1}(z).$ Moreover, we assume that inside of $\mathbb{O}$ there
 are $m$ positive and $m$ negative roots of $2J_{\nu}(z)-zJ_{\nu+1}(z).$ Of course we can make this assumption since \cite[p. 222]{nist} $(2-\nu)J_\nu(z)+zJ'_\nu(z)=2J_\nu(z)-zJ_{\nu+1}(z)$ and according to Lemma \ref{lem1} the zeros of $2J_{\nu}(z)-zJ_{\nu+1}(z)$ are simple and real.
Observe that the point $w=0$ is a removable singularity, and the residue theorem \cite[p. 19]{nist} implies that
\begin{align}\label{a4}\frac{1}{2\pi{i}}&\int_{\mathbb{O}}\frac{z}{w(w-z)}\frac{wJ_{\nu+2}(w)-4J_{\nu+1}(w)}{2J_{\nu}(w)-wJ_{\nu+1}(w)}dw\nonumber\\&
=\frac{zJ_{\nu+2}(z)-4J_{\nu+1}(z)}{2J_{\nu}(z)-zJ_{\nu+1}(z)}+\sum_{n=1}^m\frac{z}{\beta_{\nu,n}(\beta_{\nu,n}-z)}+
\sum_{n=1}^m\frac{z}{\beta_{\nu,n}(\beta_{\nu,n}+z)}\nonumber\\&=\frac{zJ_{\nu+2}(z)-4J_{\nu+1}(z)}{2J_{\nu}(z)-zJ_{\nu+1}(z)}+
\sum_{n=1}^m\frac{2z}{\beta_{\nu,n}^2-z^2}.\end{align}
In what follows we show that $A$ and $B$ can be replaced by suitable
sequences which increase without limit and in the same time the
expression
 $$\frac{wJ_{\nu+2}(w)-4J_{\nu+1}(w)}{2J_{\nu}(w)-wJ_{\nu+1}(w)}$$
 remains bounded if $w\in{\mathbb{O}}.$
Since the above function is an odd function of $w,$ it is sufficient to consider the case when $\real{w}>0.$ As in Lemma \ref{lem4} we use the notations
$$H_{\nu+k}^{(j)}(w)=\Big(\frac{2}{\pi{w}}\Big)^{\frac{1}{2}}e^{(-1)^{j-1}i(w-\frac{1}{2}(\nu+k)\pi-\frac{1}{4}\pi)}(1+\eta_{k,j,\nu}(w)),$$
where $k\in\{0,1,2\}$ and $j\in\{1,2\}.$ The relation $$2J_\nu(w)=H_\nu^{(1)}(w)+H_\nu^{(2)}(w)$$ and Lemma \ref{lem3}
lead to
$$\frac{wJ_{\nu+2}(w)-4J_{\nu+1}(w)}{2J_{\nu}(w)-wJ_{\nu+1}(w)}=\frac{S(w)}{N(w)},$$
 where
\begin{eqnarray}S(w)=(-i)e^{2i(w-\frac{1}{2}(\nu+1)\pi-\frac{1}{4}\pi)}(1+\eta_{2,1,\nu}(w)) +i(1+\eta_{2,2,\nu}(w))\nonumber\\
-\frac{4}{w}\big[e^{2i(w-\frac{1}{2}(\nu+1)\pi-\frac{1}{4}\pi)}(1+\eta_{1,1,\nu}(w))
+(1+\eta_{1,2,\nu}(w))\big] \nonumber \end{eqnarray} and
\begin{eqnarray}N(w)=\frac{2}{w}\big[ie^{2i(w-\frac{1}{2}(\nu+1)\pi-\frac{1}{4}\pi)}(1+\eta_{0,1,\nu}(w)) -i(1+\eta_{0,2,\nu}(w))\big]\nonumber\\
-e^{2i(w-\frac{1}{2}(\nu+1)\pi-\frac{1}{4}\pi)}(1+\eta_{1,1,\nu}(w))
+(1+\eta_{1,2,\nu}(w)).\nonumber \end{eqnarray} Since
$$\lim_{B\rightarrow\infty}\frac{S(x+iB)}{N(x+iB)}=i$$ and the
convergence is uniform with respect to  $x\in\mathbb{R},$ it
follows that $S(w)/N(w)$ is bounded on $\{x+iB|x\in[0,A]\}$ if $B$ is  large  enough. An
analogous argument shows that $S(w)/N(w)$ is bounded on the segment  $\{x-iB|x\in[0,A]\}.$ It remains to prove
the boundness in  case  $w\in\{A+iy|y\in[-B,B]\}.$ In this case
we put $A=A_m=2m\pi+\frac{1}{2}(\nu+1)+\frac{3}{4}\pi$  and we get
\begin{equation}\label{wm}\lim_{m\rightarrow\infty}\frac{S(2m\pi+\frac{1}{2}(\nu+1)+\frac{1}{4}\pi+iy)}{N(2m\pi+\frac{1}{2}(\nu+1)+\frac{1}{4}\pi+iy)}=i\end{equation}
The convergence in (\ref{wm}) is uniform on $y\in[-B,B],$  it follows that $S(w)/N(w)$ is bounded on the segment  $\{A_m+iy|y\in[-B,B]\},$  if $A_m$ is
large enough. Consequently $S(w)/N(w)$ is bounded on the perimeter of the rectangle $\mathbb{O}$ if $B$ and $A_m$
tend to infinity. Thus, it follows that
\begin{eqnarray}\label{sz} \lim\sb{\substack{{B\rightarrow\infty}\\
{{m}\rightarrow\infty}}}\frac{1}{2\pi{i}}\int_{\mathbb{O}}\frac{z}{w(w-z)}\frac{wJ_{\nu+2}(w)-4J_{\nu+1}(w)}{2J_{\nu}(w)-wJ_{\nu+1}(w)}dw
=0.\end{eqnarray}
Now, (\ref{a4}) and (\ref{sz}) imply
$$\frac{\nu(\nu-2)J_\nu(z)+(3-2\nu)zJ_\nu'(z)+z^2J_\nu''(z)}{2(2-\nu)J_\nu(z)+2zJ_\nu'(z)}=
\frac{z}{2}\cdot\frac{zJ_{\nu+2}(z)-4J_{\nu+1}(z)}{2J_{\nu}(z)-zJ_{\nu+1}(z)}=-\sum_{n\geq 1}\frac{z^2}{\beta_{\nu,n}^2-z^2},$$
and finally we get
$$\frac{\nu(\nu-2)J_\nu(z^{\frac{1}{2}})+(3-2\nu)z^{\frac{1}{2}}J_\nu'(z^{\frac{1}{2}})+zJ_\nu''(z^{\frac{1}{2}})}
{2(2-\nu)J_\nu(z^{\frac{1}{2}})+2z^{\frac{1}{2}}J_\nu'(z^{\frac{1}{2}})}=-\sum_{n\geq1}\frac{z}{\beta_{\nu,n}^2-z}.$$
\end{proof}

\begin{lemma}\label{ccccccccccvvvvvvvvvvvv}
If $\nu\geq 0,$ then $\alpha_{\nu,1}>1,$ where $\alpha_{\nu,1}$ denotes the first positive root of the equation $J_{\nu}(x)-xJ_{\nu+1}(x)=0.$ Similarly, if $\nu\geq0,$ then $\beta_{\nu,1}>1,$ where $\beta_{\nu,1}$ denotes the first positive root of the equation $2J_{\nu}(x)-xJ_{\nu+1}(x)=0.$
\end{lemma}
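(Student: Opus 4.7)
The plan is to prove positivity of the two target functions on $(0,1]$ directly, so that their first positive zeros are forced to lie to the right of $1$. Put
$\Phi_\nu(x)=J_\nu(x)-xJ_{\nu+1}(x)$ and $\Psi_\nu(x)=2J_\nu(x)-xJ_{\nu+1}(x)$. Starting from the power series of $J_\nu$ and using $xJ_{\nu+1}(x)=\sum_{n\geq 0}\tfrac{2(-1)^n}{n!\,\Gamma(n+\nu+2)}(x/2)^{2n+\nu+2}$, I would collect like powers of $y=x/2$ (reindexing the second sum by $n\mapsto n-1$ and using $1/(n-1)!=n/n!$) to obtain
\begin{equation*}
\Phi_\nu(x)=\sum_{n\geq 0}\frac{(-1)^n(2n+1)}{n!\,\Gamma(n+\nu+1)}\left(\frac{x}{2}\right)^{2n+\nu},
\qquad
\Psi_\nu(x)=\sum_{n\geq 0}\frac{2(-1)^n(n+1)}{n!\,\Gamma(n+\nu+1)}\left(\frac{x}{2}\right)^{2n+\nu}.
\end{equation*}

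The next step is to verify that for every fixed $\nu\geq 0$ and $x\in(0,1]$ both series are Leibniz-type, that is, alternating with the absolute values of the terms strictly decreasing to zero. For $\Phi_\nu$ the ratio of consecutive absolute terms equals $(2n+3)y^2/[(2n+1)(n+1)(n+\nu+1)]$, where $y=x/2\leq 1/2$, and since $\nu\geq 0$ this is bounded above by $(2n+3)/[4(2n+1)(n+1)(n+1)]$; the inequality $(2n+3)<4(2n+1)(n+1)^2$ reduces to $0<8n^3+\text{(positive)}$, which holds for every $n\geq 0$. The corresponding ratio for $\Psi_\nu$ is $(n+2)y^2/[(n+1)^2(n+\nu+1)]\leq (n+2)/[4(n+1)^2(n+1)]$, which is $\leq 1/4<1$ already at $n=0$ and is smaller thereafter. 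Since the factorials force the terms to $0$, the Leibniz criterion applies.

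By Leibniz both sums have the sign of their leading (positive) term, so $\Phi_\nu(x)>0$ and $\Psi_\nu(x)>0$ for all $x\in(0,1]$ and all $\nu\geq 0$. Since $\Phi_\nu(0)=\Psi_\nu(0)/2=J_\nu(0)\geq 0$, this forces the smallest \emph{positive} zero of $\Phi_\nu$ to satisfy $\alpha_{\nu,1}>1$ and the smallest positive zero of $\Psi_\nu$ to satisfy $\beta_{\nu,1}>1$, as required.

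There is no deep obstacle in this scheme; the one place where care is needed is the bookkeeping that turns two Taylor series into a single alternating series with the clean coefficients $2n+1$ and $2(n+1)$, and the verification that the ratio bounds survive uniformly in $n\geq 0$ and $\nu\geq 0$ for all $x\in(0,1]$. An alternative route, which I expect to be somewhat heavier, would be to write $\Phi_\nu=(1-\nu)J_\nu+xJ_\nu'$ and $\Psi_\nu=(2-\nu)J_\nu+xJ_\nu'$ and use the Mittag--Leffler expansion of $xJ_\nu'(x)/J_\nu(x)$ combined with lower bounds on $j_{\nu,n}$, but the series/Leibniz approach above is more direct and avoids any appeal to properties of the zeros of $J_\nu$.
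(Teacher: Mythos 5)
Your proof is correct, but it takes a genuinely different route from the paper. The paper proves positivity of $J_\nu(x)-xJ_{\nu+1}(x)$ and $2J_\nu(x)-xJ_{\nu+1}(x)$ on $(0,1]$ by means of the Poisson-type integral representation
$$J_{\nu}(x)=\frac{2\left(\frac{x}{2}\right)^{\nu}}{\sqrt{\pi}\,\Gamma\left(\nu+\tfrac12\right)}\int_0^1(1-t^2)^{\nu-\frac{1}{2}}\cos(xt)\,dt,$$
writing each Dini function as a single integral whose integrand carries the factor $1-\frac{x^2}{2\nu+1}+\frac{x^2t^2}{2\nu+1}$ (respectively $2-\frac{x^2}{2\nu+1}+\frac{x^2t^2}{2\nu+1}$), bounding that factor below by $\frac{2\nu}{2\nu+1}\geq 0$ (respectively $\frac{4\nu+1}{2\nu+1}>0$) for $x\in(0,1]$, and using that $\cos(xt)>0$ on the relevant range. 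You instead merge the two Taylor series into a single alternating series and invoke the Leibniz criterion. Your coefficient bookkeeping is right (the coefficients $2n+1$ and $2(n+1)$ are correct), the term ratios are computed correctly, and the inequalities $2n+3<4(2n+1)(n+1)^2$ and $n+2<4(n+1)^3$ do hold for every $n\geq0$, so on $(0,1]$ with $\nu\geq0$ both series are strictly alternating with terms decreasing to zero, and positivity follows. One tiny arithmetic slip: at $n=0$ the bound $(n+2)/[4(n+1)^3]$ equals $\tfrac12$, not $\tfrac14$; this is harmless since all that is needed is that it be less than $1$. Your approach buys self-containedness — only the defining power series is used, and the restriction $\nu>-\tfrac12$ attached to the Poisson formula never enters — at the cost of somewhat more bookkeeping, whereas the paper's argument is shorter once the integral representation is granted. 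Neither argument requires the Mittag--Leffler alternative you sketch at the end.
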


\begin{proof}[\bf Proof]
We shall use the following integral representation \cite[p. 224]{nist}
\begin{equation}\label{integrep}J_{\nu}(x)=\frac{2\left(\frac{x}{2}\right)^{\nu}}{\sqrt{\pi}
\Gamma\left(\nu+\frac{1}{2}\right)}\int_0^1(1-t^2)^{\nu-\frac{1}{2}}\cos(xt)dt,\end{equation}
which is valid for all $x\in\mathbb{R}$ and $\nu>-\frac{1}{2}.$
Observe that for $x\in(0,1]$ and $\nu\geq0$ we have
\begin{align*}J_{\nu}(x)-xJ_{\nu+1}(x)&=\frac{2\left(\frac{x}{2}\right)^{\nu}}{\sqrt{\pi}\Gamma\left(\nu+\frac{1}{2}\right)}\int_0^1\left(1-\frac{x^2}{2\nu+1}+\frac{x^2t^2}{2\nu+1}\right)(1-t^2)^{\nu-\frac{1}{2}}\cos(xt)dt\\
&>\frac{2\left(\frac{x}{2}\right)^{\nu}}{\sqrt{\pi}\Gamma\left(\nu+\frac{1}{2}\right)}\int_0^1\frac{2\nu}{2\nu+1}(1-t^2)^{\nu-\frac{1}{2}}\cos(xt)dt\geq0,\end{align*}
\begin{align*}2J_{\nu}(x)-xJ_{\nu+1}(x)&=\frac{2\left(\frac{x}{2}\right)^{\nu}}{\sqrt{\pi}\Gamma\left(\nu+\frac{1}{2}\right)}\int_0^1\left(2-\frac{x^2}{2\nu+1}+\frac{x^2t^2}{2\nu+1}\right)(1-t^2)^{\nu-\frac{1}{2}}\cos(xt)dt\\
&>\frac{2\left(\frac{x}{2}\right)^{\nu}}{\sqrt{\pi}\Gamma\left(\nu+\frac{1}{2}\right)}\int_0^1\frac{4\nu+1}{2\nu+1}(1-t^2)^{\nu-\frac{1}{2}}\cos(xt)dt>0,\end{align*}
Thus, the smallest positive roots of the transcendent equations in the question, that is, $J_{\nu}(x)-xJ_{\nu+1}(x)=0$ and $2J_{\nu}(x)-xJ_{\nu+1}(x)=0,$ must be bigger then one.
\end{proof}

\begin{lemma}\label{lemlandau}\cite[p. 196]{landau} For $\nu>-1$ let $\gamma_{\nu,n}$ be the $n$th positive root of the equation $\gamma{J}_\nu(z)+zJ_\nu'(z)=0.$ If $\nu+\gamma\geq0,$ then the function $\nu\mapsto \gamma_{\nu,n}$ is strictly increasing on $(-1,\infty)$ for $n\in\{1,2,\dots\}$ fixed.
\end{lemma}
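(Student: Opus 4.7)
The plan is to convert the defining equation for $\gamma_{\nu,n}$ into a Mittag--Leffler identity, differentiate it implicitly in $\nu$, and then use the classical monotonicity of the Bessel zeros $j_{\nu,k}$ in $\nu$ to pin down the sign of $d\gamma_{\nu,n}/d\nu$.

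At $z=\gamma_{\nu,n}$ one has $\gamma J_\nu(z)+zJ_\nu'(z)=0$, and Lemma \ref{lem1} rules out simultaneous zeros of $J_\nu$ and $J_\nu'$, so $J_\nu(\gamma_{\nu,n})\neq 0$. Dividing by $J_\nu(\gamma_{\nu,n})$ and inserting the Mittag--Leffler expansion
$$\frac{zJ_\nu'(z)}{J_\nu(z)}=\nu-\sum_{k\geq 1}\frac{2z^{2}}{j_{\nu,k}^{2}-z^{2}},$$
which follows from the Hadamard product of $J_\nu$ (valid for $\nu>-1$), yields the key identity
$$\sum_{k\geq 1}\frac{2\gamma_{\nu,n}^{2}}{j_{\nu,k}^{2}-\gamma_{\nu,n}^{2}}=\gamma+\nu. \qquad(\star)$$
The hypothesis $\gamma+\nu\geq 0$ is exactly what Lemma \ref{lem1} uses to guarantee that $\gamma_{\nu,n}$ is real, so the identity makes sense. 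Because the zeros of $\gamma J_\nu+zJ_\nu'$ and of $J_\nu$ are simple (Lemma \ref{lem1}), the implicit function theorem makes both $\nu\mapsto\gamma_{\nu,n}$ and $\nu\mapsto j_{\nu,k}$ smooth, and the asymptotic $j_{\nu,k}\sim k\pi$ gives an $O(1/k^{2})$ tail estimate that is locally uniform in $\nu$, so termwise differentiation of $(\star)$ is legitimate and produces
$$\sum_{k\geq 1}\frac{4\gamma_{\nu,n}\,j_{\nu,k}\bigl(j_{\nu,k}\gamma_{\nu,n}'-\gamma_{\nu,n}j_{\nu,k}'\bigr)}{(j_{\nu,k}^{2}-\gamma_{\nu,n}^{2})^{2}}=1,$$
with primes denoting $d/d\nu$.

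Now I invoke the classical fact that $\nu\mapsto j_{\nu,k}$ is strictly increasing on $(-1,\infty)$ for every $k\geq 1$ (a Sturm-comparison argument applied to the Liouville form $u''+(1-(\nu^{2}-1/4)/x^{2})u=0$ of Bessel's equation), so $j_{\nu,k}'>0$. The proof then finishes by contradiction: if $\gamma_{\nu,n}'\leq 0$ at some $\nu>-1$, then for every $k$
$$j_{\nu,k}\gamma_{\nu,n}'-\gamma_{\nu,n}j_{\nu,k}'\leq-\gamma_{\nu,n}j_{\nu,k}'<0,$$
and since $(j_{\nu,k}^{2}-\gamma_{\nu,n}^{2})^{2}>0$ and $4\gamma_{\nu,n}j_{\nu,k}>0$, every summand above is strictly negative --- contradicting the value $+1$ of the sum. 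Hence $\gamma_{\nu,n}'>0$, which is the desired strict monotonicity.

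The main obstacle I anticipate is the justification of the termwise differentiation of $(\star)$, which needs a uniform tail bound via $j_{\nu,k}\sim k\pi$ together with smooth $\nu$-dependence, and the appeal to the monotonicity of $j_{\nu,k}$ on the full range $\nu>-1$; the latter is well documented for $\nu\geq 0$ and requires only a minor additional Sturm argument on $(-1,0)$. Both obstacles are technical rather than conceptual, and the sign bookkeeping is uniform in $n$ because the denominator is a square, so the argument handles both $j_{\nu,k}<\gamma_{\nu,n}$ and $j_{\nu,k}>\gamma_{\nu,n}$ without case distinctions.
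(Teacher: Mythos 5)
The paper offers no proof of this lemma at all --- it is imported verbatim from Landau \cite{landau} --- so there is nothing internal to compare against; your argument is a self-contained substitute, and its computational core is correct. Evaluating the Mittag--Leffler expansion of $zJ_\nu'(z)/J_\nu(z)$ at $z=\gamma_{\nu,n}$ (legitimate since $J_\nu(\gamma_{\nu,n})\neq0$ by the simplicity of the zeros of $J_\nu$) does give $(\star)$, the termwise differentiation is covered by the locally uniform $O(1/k^{2})$ decay, and the sign argument after differentiation is clean precisely because the denominators become squares, so the contradiction with the value $1$ is airtight.

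Two points need repair. First, the hypothesis $\nu+\gamma\geq0$ does not enter where you say it does: $\gamma_{\nu,n}$ is real and positive by definition, and nothing in $(\star)$, its derivative, or the contradiction uses the sign of $\nu+\gamma$. What your computation actually shows is that each smooth branch of positive zeros is locally increasing; to conclude that the \emph{$n$th positive zero} is increasing you also need the labelling to be preserved in $\nu$, and that is exactly what $\nu+\gamma\geq0$ buys. When $\nu+\gamma$ increases through $0$, the two conjugate purely imaginary zeros from Lemma \ref{lem1} collide at the origin and re-emerge as a pair of small real zeros $\approx\pm\sqrt{2(\nu+1)(\nu+\gamma)}$; a new smallest positive zero is born, $\gamma_{\nu,1}$ drops discontinuously towards $0$, and every index shifts by one, so the ``$n$th positive zero'' is not even continuous there. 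For $\nu+\gamma\geq0$ Lemma \ref{lem1} guarantees that all zeros are real and simple, no zero enters or leaves $(0,\infty)$, and the branch tracked by the implicit function theorem is genuinely $\gamma_{\nu,n}$ throughout; a sentence to this effect must be added. Second, your justification of $dj_{\nu,k}/d\nu>0$ on all of $(-1,\infty)$ by Sturm comparison in the Liouville form is not adequate: the potential $(\nu^{2}-\tfrac14)/x^{2}$ depends only on $\nu^{2}$, so comparing potentials alone cannot distinguish $\nu$ from $-\nu$, yet $j_{-1/2,k}=(k-\tfrac12)\pi<k\pi=j_{1/2,k}$; the boundary behaviour at the origin has to be tracked, and the clean route is simply to cite the integral formula \eqref{gyokder} already quoted in the paper, which gives $dj_{\nu,k}/d\nu>0$ for every $\nu>-1$. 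With these two repairs the proof is complete.
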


\section{\bf Proofs of the Main Results}
\setcounter{equation}{0}

In this section our aim is to prove the main results of this paper.

\begin{proof}[\bf Proof of Theorem \ref{th1}]
Observe that
$$1+\frac{zf_{\nu}''(z)}{f'_{\nu}(z)}=1+\frac{zJ_\nu''(z)}{J_\nu'(z)}+\left(\frac{1}{\nu}-1\right)\frac{zJ_\nu'(z)}{J_\nu(z)}.$$
Now, recall the following infinite product representations \cite[p. 235]{nist}
$$J_{\nu}(z)=\frac{\left(\frac{1}{2}z\right)^{\nu}}{\Gamma(\nu+1)}\prod_{n\geq 1}\left(1-\frac{z^2}{j_{\nu,n}^2}\right), \ \
J_{\nu}'(z)=\frac{\left(\frac{1}{2}z\right)^{\nu-1}}{2\Gamma(\nu)}\prod_{n\geq 1}\left(1-\frac{z^2}{j_{\nu,n}'^2}\right),$$
where $j_{\nu,n}$ and $j_{\nu,n}'$ are the $n$th positive roots of $J_{\nu}$ and $J_{\nu}',$ respectively. Logarithmic differentiation yields
$$\frac{zJ'_\nu(z)}{J_\nu(z)}=\nu-\sum_{n\geq1}\frac{2z^2}{j_{\nu,n}^2-z^2},\ \ 1+\frac{zJ_\nu''(z)}{J_\nu'(z)}=\nu-\sum_{n\geq1}\frac{2z^2}{j_{\nu,n}'^2-z^2},$$
which implies that
$$1+\frac{zf_{\nu}''(z)}{f_{\nu}'(z)}=1-\left(\frac{1}{\nu}-1\right)\sum_{n\geq1}\frac{2z^2}{j_{\nu,n}^2-z^2}-\sum_{n\geq1}\frac{2z^2}{j_{\nu,n}'^2-z^2}.$$
Now, suppose that $\nu\in(0,1].$ By using the inequality \eqref{kk}, for all $z\in\mathbb{D}(0,j_{\nu,1}')$ we obtain the inequality
\begin{equation}\label{fnuconvex}\real\left(1+\frac{zf_{\nu}''(z)}{f_{\nu}'(z)}\right)\geq1-\left(\frac{1}{\nu}-1\right)
\sum_{n\geq1}\frac{2r^2}{j_{\nu,n}^2-r^2}-\sum_{n\geq1}\frac{2r^2}{j_{\nu,n}'^2-r^2},\end{equation}
where $|z|=r.$ Moreover, observe that if we use the inequality \eqref{llll} then we get that the above inequality is also valid when $\nu>1.$ Here we used that the zeros $j_{\nu,n}$ and $j_{\nu,n}'$ interlace according to the inequalities \cite[p. 235]{nist} \begin{equation}\label{interlace}\nu\leq j_{\nu,1}'<j_{\nu,1}<j_{\nu,2}'<j_{\nu,2}<j_{\nu,3}'<{\dots}.\end{equation} Now, the above deduced inequality implies for $r\in(0,j_{\nu,1}')$
$$\inf_{z\in\mathbb{D}(0,r)}\left\{\real\left(1+\frac{zf_{\nu}''(z)}{f_{\nu}'(z)}\right)\right\}=1+\frac{rf_{\nu}''(r)}{f_{\nu}'(r)}.$$
On the other hand, the function $u_{\nu}:(0,j_{\nu,1}')\to\mathbb{R},$ defined by $$u_{\nu}(r)=1+\frac{rf_{\nu}''(r)}{f_{\nu}'(r)},$$ is  strictly decreasing
since \begin{align*}u_{\nu}'(r)&=-\left(\frac{1}{\nu}-1\right)\sum_{n\geq1}\frac{4rj_{\nu,n}^2}{(j_{\nu,n}^2-r^2)^2}
-\sum_{n\geq1}\frac{4rj_{\nu,n}'^2}{(j_{\nu,n}'^2-r^2)^2}\\&<\sum_{n\geq1}\frac{4rj_{\nu,n}^2}{(j_{\nu,n}^2-r^2)^2}
-\sum_{n\geq1}\frac{4rj_{\nu,n}'^2}{(j_{\nu,n}'^2-r^2)^2}<0\end{align*}
for $\nu>0$ and $r\in(0,j_{\nu,1}').$ Here we used again that the zeros $j_{\nu,n}$ and $j_{\nu,n}'$ interlace and for all $n\in\{1,2,\dots\},$ $\nu>0$ and $r<\sqrt{j_{\nu,1}j_{\nu,1}'}$ we have that $$j_{\nu,n}^2(j_{\nu,n}'^2-r^2)^2<j_{\nu,n}'^2(j_{\nu,n}^2-r^2)^2.$$ Observe also that $\lim_{r\searrow0}u_{\nu}(r)=1>\alpha$ and $\lim_{r\nearrow j_{\nu,1}'}u_{\nu}(r)=-\infty,$ which means that for $z\in\mathbb{D}(0,r_1)$ we have
$$\real\left(1+\frac{zf_{\nu}''(z)}{f_{\nu}'(z)}\right)>\alpha,$$
if and only if  $r_1$ is the unique root of
$$1+\frac{rf_{\nu}''(r)}{f_{\nu}'(r)}=\alpha,$$
situated in $(0,j_{\nu,1}').$
\end{proof}

\begin{proof}[\bf Proof of Theorem \ref{th2}]
Observe that
$$z\frac{g_{\nu}''(z)}{g_{\nu}'(z)}=\frac{\nu(\nu-1)J_\nu(z)+2(1-\nu)zJ_\nu'(z)+z^2J_\nu''(z)}{(1-\nu)J_\nu(z)+zJ_\nu'(z)}.$$
The recurrence formula \cite[p. 222]{nist} $zJ'_\nu(z)=\nu{J_\nu(z)}-zJ_{\nu+1}(z)$
implies
$$z\frac{g''_{\nu}(z)}{g'_{\nu}(z)}=z\frac{zJ_{\nu+2}(z)-3J_{\nu+1}(z)}{J_{\nu}(z)-zJ_{\nu+1}(z)},$$
and using (\ref{sxv}) it follows that
$$1+z\frac{g''_{\nu}(z)}{g'_{\nu}(z)}=1-\sum_{n\geq1}\frac{2z^2}{\alpha_{\nu,n}^2-z^2}.$$
Application of the inequality \eqref{kk} implies that
$$\real\left(1+z\frac{g''_{\nu}(z)}{g'_{\nu}(z)}\right)\geq1-\sum_{n\geq1}\frac{2r^2}{\alpha_{\nu,n}^2-r^2},$$
where $|z|=r.$ Thus, for $r\in(0,\alpha_{\nu,1})$ we get
$$\inf_{z\in\mathbb{D}(0,r)}\left\{\real\left(1+\frac{zg_{\nu}''(z)}{g_{\nu}'(z)}\right)\right\}=
1-\sum_{n\geq1}\frac{2r^2}{\alpha_{\nu,n}^2-r^2}=1+\frac{rg_{\nu}''(r)}{g_{\nu}'(r)}.$$
The function
$v_{\nu}:(0,\alpha_{\nu,1})\to\mathbb{R},$ defined by
$$v_{\nu}(r)=1+\frac{rg_{\nu}''(r)}{g_{\nu}'(r)},$$ is strictly decreasing
and $$\lim_{r\searrow0}v_{\nu}(r)=1, \ \ \ \lim_{r\nearrow\alpha_{\nu,1}}v_{\nu}(r)=-\infty.$$
Consequently, the equation
$$1+\frac{rg_{\nu}''(r)}{g_{\nu}'(r)}=\alpha$$  has a unique root
$r_2$ in $(0,\alpha_{\nu,1}),$ and this equation is equivalent to
$$ 1+r\frac{rJ_{\nu+2}(r)-3J_{\nu+1}(r)}{J_{\nu}(r)-rJ_{\nu+1}(r)}=\alpha.$$ In other words, we have
$$\real\left(1+\frac{zg_{\nu}''(z)}{g_{\nu}'(z)}\right)>\alpha, \
z\in{\mathbb{D}(0,r_2)} \ \ \textrm{and} \ \
\inf_{z\in{\mathbb{D}(0,r_2)}}\left\{\real\left(1+\frac{zg_{\nu}''(z)}{g_{\nu}'(z)}\right)\right\}=\alpha.$$
Finally, let us recall that (see \cite[p. 597]{watson}) when $\rho+\nu>0$
and $\nu>-1$ the so-called Dini function $z\mapsto
zJ_{\nu}'(z)+\rho J_{\nu}(z)$ has only real zeros and according to
Ismail and Muldoon \cite[p. 11]{ismail} we know that the smallest
positive zero of the above function is less than $j_{\nu,1}.$ This in turn implies
that $\alpha_{\nu,1}<j_{\nu,1},$ which completes the proof.
\end{proof}

\begin{proof}[\bf Proof of Theorem \ref{th3}]
Observe that
$$z\frac{h_{\nu}''(z)}{h_{\nu}'(z)}=\frac{\nu(\nu-2)J_\nu(z^{\frac{1}{2}})+(3-2\nu)z^{\frac{1}{2}}J_\nu'(z^{\frac{1}{2}})+zJ_\nu''(z^{\frac{1}{2}})}{2(2-\nu)J_\nu(z^{\frac{1}{2}})+2z^{\frac{1}{2}}J_\nu'(z^{\frac{1}{2}})},$$
and according to Lemma \ref{lem5} it follows
$$1+z\frac{h_{\nu}''(z)}{h_{\nu}'(z)}=1-\sum_{n\geq1}\frac{z}{\beta_{\nu,n}^2-z}.$$
Let $r\in\left(0,\beta_{\nu,1}^2\right)$ be a fixed number. The minimum principle for harmonic functions and inequality \eqref{llll} for $\lambda=0$ imply that for $z\in\mathbb{D}(0,r)$ we have
\begin{align*}
\real&\left(1+z\frac{h_{\nu}''(z)}{h_{\nu}'(z)}\right)=
\real\left(1-\sum_{n\geq1}\frac{z}{\beta_{\nu,n}^2-z}\right)\geq
\min_{|z|=r}\real\left(1-\sum_{n\geq1}\frac{z}{\beta_{\nu,n}^2-z}\right)\\
&=\min_{|z|=r}\left(1-\sum_{n\geq1}\real\frac{z}{\beta_{\nu,n}^2-z}\right)
\geq1-\sum_{n\geq1}\frac{r}{\beta_{\nu,n}^2-r}=1+r\frac{h_{\nu}''(r)}{h_{\nu}'(r)}.
\end{align*}
Consequently, it follows that
$$\inf_{z\in{\mathbb{D}(0,r)}}\left\{\real\left(1+z\frac{h_{\nu}''(z)}{h_{\nu}'(z)}\right)\right\}=
1+r\frac{h_{\nu}''(r)}{h_{\nu}'(r)}=1+\frac{r^{\frac{1}{2}}}{2}\cdot\frac{r^{\frac{1}{2}}J_{\nu+2}(r^{\frac{1}{2}})-4J_{\nu+1}(r^{\frac{1}{2}})}
{2J_{\nu}(r^{\frac{1}{2}})-r^{\frac{1}{2}}J_{\nu+1}(r^{\frac{1}{2}})}.$$
Now, let $r_3$  be the smallest positive  root of the equation
\begin{eqnarray}\label{w14s2d5c1ssss6a2j333}\alpha=1+\frac{r^{\frac{1}{2}}}{2}\cdot\frac{r^{\frac{1}{2}}J_{\nu+2}(r^{\frac{1}{2}})-4J_{\nu+1}(r^{\frac{1}{2}})}
{2J_{\nu}(r^{\frac{1}{2}})-r^{\frac{1}{2}}J_{\nu+1}(r^{\frac{1}{2}})}.\end{eqnarray}
For $z\in\mathbb{D}(0,r_3)$ we have
$$\real\left(1+z\frac{h_{\nu}''(z)}{h_{\nu}'(z)}\right)>\alpha.$$ In order to finish the proof, we need to show that the equation (\ref{w14s2d5c1ssss6a2j333}) has a unique root in $\left(0,\beta_{\nu,1}^2\right).$
But, the equation (\ref{w14s2d5c1ssss6a2j333}) is equivalent to
$$w_{\nu}(r)=1-\alpha-\sum_{n\geq1}\frac{r}{\beta_{\nu,n}^2-r}=0,$$
and we have $$\lim_{r\searrow0}w_{\nu}(r)=1-\alpha>0, \   \   \  \lim_{r\nearrow\beta_{\nu,1}^2}w_{\nu}(r)=-\infty.$$
Now, since the function $w_{\nu}$ is strictly decreasing on $(0,\beta_{\nu,1}^2),$ it follows that the equation $w_{\nu}(r)=0$ has a unique root.

Finally, since for $\rho+\nu>0$
and $\nu>-1$ the function $z\mapsto
zJ_{\nu}'(z)+\rho J_{\nu}(z)$ has only real zeros (see \cite[p. 597]{watson}) and the smallest
positive zero of the above function is less than $j_{\nu,1}$ (see \cite[p. 11]{ismail}), we obtain that $\beta_{\nu,1}<j_{\nu,1},$ which completes the proof.
\end{proof}

\begin{proof}[\bf Proof of Theorem \ref{th4}]
According to \eqref{fnuconvex} for $z\in\mathbb{D}$ we obtain that
\begin{align*}\real\left(1+\frac{zf_{\nu}''(z)}{f_{\nu}'(z)}\right)&\geq1-\left(\frac{1}{\nu}-1\right)
\sum_{n\geq1}\frac{2r^2}{j_{\nu,n}^2-r^2}-\sum_{n\geq1}\frac{2r^2}{j_{\nu,n}'^2-r^2}\\
&\geq1-\left(\frac{1}{\nu}-1\right)
\sum_{n\geq1}\frac{2}{j_{\nu,n}^2-1}-\sum_{n\geq1}\frac{2}{j_{\nu,n}'^2-1}\\
&=1+\frac{J_\nu''(1)}{J_\nu'(1)}+\left(\frac{1}{\nu}-1\right)\frac{J_\nu'(1)}{J_\nu(1)}=1+\frac{f_{\nu}''(1)}{f_{\nu}'(1)}.\end{align*}
Now, consider the function $u:(\nu^{\ast},\infty)\to \mathbb{R},$ defined by
$$u(\nu)=1+\frac{J_\nu''(1)}{J_\nu'(1)}+\left(\frac{1}{\nu}-1\right)\frac{J_\nu'(1)}{J_\nu(1)}=1-\left(\frac{1}{\nu}-1\right)\sum_{n\geq1}\frac{2}{j_{\nu,n}^2-1}-\sum_{n\geq1}\frac{2}{j_{\nu,n}'^2-1}.$$
We note that this function is well defined since $J_{\nu}(1)>0$ and $J_{\nu}'(1)>0$ when $\nu>\nu^{\ast}.$ By using \eqref{integrep} for $\nu>-\frac{1}{2}$ we get \begin{equation}\label{inBesNeu}J_{\nu}(1)=\frac{2\left(\frac{1}{2}\right)^{\nu}}{\sqrt{\pi}
\Gamma\left(\nu+\frac{1}{2}\right)}\int_0^1(1-t^2)^{\nu-\frac{1}{2}}\cos(t)dt>0.\end{equation}
Moreover, since $\nu\mapsto j_{\nu,n}$ is strictly increasing on $(0,\infty)$ for each $n\in\{1,2,\dots\}$ (see \cite[p. 236]{nist}), it follows that
$$\frac{d}{d\nu}\left(\frac{J_{\nu}'(1)}{J_{\nu}(1)}\right)=
\frac{d}{d\nu}\left(\nu-\sum_{n\geq1}\frac{2}{j_{\nu,n}^2-1}\right)=1+\sum_{n\geq1}\frac{4j_{\nu,n}\frac{d j_{\nu,n}}{d\nu}}{(j_{\nu,n}^2-1)^2}>0$$
for $\nu>0.$ This means that if $\nu>\nu^{\ast},$ then $J_{\nu}'(1)/J_{\nu}(1)>J_{\nu^{\ast}}'(1)/J_{\nu^{\ast}}(1)=0.$

Now, in what follows we show that $u$ is strictly increasing. For this we distinguish two cases. First we consider
that $\nu\in(\nu^{\ast},1].$ Since the functions $\nu\mapsto j_{\nu,n}$ and $\nu\mapsto j_{\nu,n}'$ are strictly increasing on $[0,\infty)$ for each $n\in\{1,2,\dots\}$ (see \cite[p. 236]{nist}), it follows that the functions $\nu\mapsto 2/(j_{\nu,n}^2-1)$ and $\nu\mapsto 2/(j_{\nu,n}'^2-1)$ are strictly decreasing on $[0,\infty)$ for each $n\in\{1,2,\dots\},$ and consequently $u$ is strictly increasing on $(\nu^{\ast},1].$

Suppose that $\nu>1.$ In this case we have
$$u'(\nu)=\frac{1}{\nu^2}\sum_{n\geq1}\frac{2}{j_{\nu,n}^2-1}-4\left(1-\frac{1}{\nu}\right)\sum_{n\geq1}\frac{j_{\nu,n}\frac{d j_{\nu,n}}{d \nu}}{(j_{\nu,n}^2-1)^2}+4\sum_{n\geq1}\frac{j_{\nu,n}'\frac{d j_{\nu,n}'}{d \nu}}{(j_{\nu,n}'^2-1)^2}.$$
Recall that for any $n\in\{1,2,\dots\}$ the derivative of $j_{\nu,n}$ and $j_{\nu,n}'$ with respect to $\nu$ can be written as \cite[p. 236]{nist}
\begin{equation}\label{gyokder}\frac{d j_{\nu,n}}{d \nu}=2j_{\nu,n}\int_0^{\infty}K_0(2j_{\nu,n}\sinh(t))e^{-2\nu t}dt,\end{equation}
$$\frac{d j_{\nu,n}'}{d \nu}=\frac{2j_{\nu,n}'}{j_{\nu,n}'^2-\nu^2}\int_0^{\infty}(j_{\nu,n}'^2\cosh(2t)-\nu^2)K_0(2j_{\nu,n}'\sinh(t))e^{-2\nu t}dt,$$
where $K_0$ stands for the modified Bessel function of the second kind and zero order (see \cite[p. 252]{nist}). Observe that for $\nu>1,$ $n\in\{1,2,\dots\}$ and $t>0$ we have
\begin{equation}\label{qwe1}\frac{j_{\nu,n}'^2\cosh(2t)-\nu^2}{j_{\nu,n}'^2-\nu^2}>1>1-\frac{1}{\nu},\end{equation}
\begin{equation}\label{qwe2}\frac{j_{\nu,n}'^2}{(j_{\nu,n}'^2-1)^2}>\frac{j_{\nu,n}^2}{(j_{\nu,n}^2-1)^2},\end{equation}
since according to \eqref{interlace} we have that $j_{\nu,n}^2>j_{\nu,n}'^2$ and $j_{\nu,n}^2j_{\nu,n}'^2>j_{\nu,1}^2j_{\nu,1}'^2>j_{1,1}^2j_{1,1}'^2>1.$ On the other hand, we know that $K_0$ is strictly decreasing on $(0,\infty)$, and this implies that for each $\nu>1,$ $n\in\{1,2,\dots\}$ and $t>0$ we have
$$K_0(2j_{\nu,n}'\sinh(t))>K_0(2j_{\nu,n}\sinh(t)).$$ Combining this with \eqref{qwe1} and \eqref{qwe2} we obtain
\begin{align*}
4\sum_{n\geq1}\frac{j_{\nu,n}'\frac{d j_{\nu,n}'}{d \nu}}{(j_{\nu,n}'^2-1)^2}&=
8\sum_{n\geq1}\int_0^{\infty}\frac{j_{\nu,n}'^2\cosh(2t)-\nu^2}{j_{\nu,n}'^2-\nu^2}\frac{j_{\nu,n}'^2}{(j_{\nu,n}'^2-1)^2}K_0(2j_{\nu,n}'\sinh(t))e^{-2\nu t}dt\\
&>8\sum_{n\geq1}\int_0^{\infty}\frac{j_{\nu,n}'^2\cosh(2t)-\nu^2}{j_{\nu,n}'^2-\nu^2}\frac{j_{\nu,n}'^2}{(j_{\nu,n}'^2-1)^2}K_0(2j_{\nu,n}\sinh(t))e^{-2\nu t}dt\\
&>8\sum_{n\geq1}\int_0^{\infty}\left(1-\frac{1}{\nu}\right)\frac{j_{\nu,n}^2}{(j_{\nu,n}^2-1)^2}K_0(2j_{\nu,n}\sinh(t))e^{-2\nu t}dt\\
&=4\left(1-\frac{1}{\nu}\right)\sum_{n\geq1}\frac{j_{\nu,n}\frac{d j_{\nu,n}}{d \nu}}{(j_{\nu,n}^2-1)^2},
\end{align*}
which implies that $u'(\nu)>0$ for $\nu>1,$ and thus the function $u$ is strictly increasing on $(1,\infty),$ and hence on the whole $(\nu^{\ast},\infty).$ Consequently, if $\nu\geq\nu_{\alpha}=\nu_{\alpha}(f_{\nu}),$ then we get the inequality $u(\nu)\geq u(\nu_{\alpha}).$ This in turn implies that $\nu_{\alpha}$ is the smallest value having the property that the condition $\nu\geq\nu_{\alpha}$ implies that for all $z\in\mathbb{D}$ we have
$$\real\left(1+z\frac{f_{\nu}''(z)}{f_{\nu}'(z)}\right)>u(\nu_{\alpha})=\alpha.$$ Thus, we proved that the function $f_{\nu}$ is convex of order $\alpha\in[0,1)$ in $\mathbb{D}$ if and only if
$\nu\geq\nu_{\alpha}(f_{\nu}),$ where $\nu_{\alpha}=\nu_{\alpha}(f_{\nu})$ is the unique  root of
the equation $u(\nu)=\alpha,$ that is,
$$1+\frac{J_\nu''(1)}{J_\nu'(1)}+\left(\frac{1}{\nu}-1\right)\frac{J_\nu'(1)}{J_\nu(1)}=\alpha.$$
Since $J_{\nu}$ is a particular solution of the Bessel differential equation (see \cite[p. 217]{nist}), it follows that
$$J_{\nu}''(1)+J_{\nu}'(1)+(1-\nu^2)J_{\nu}(1)=0,$$
and by using this the above equation can be rewritten as
$$\nu(\nu^2-1)J_{\nu}^2(1)+(1-\nu)(J_{\nu}'(1))^2=\alpha\nu J_{\nu}(1)J_{\nu}'(1).$$
In particular, when $\alpha=0$ the above equation becomes
$(\nu-1)(\nu(\nu+1)J_{\nu}^2(1)-(J_{\nu}'(1))^2)=0,$ and by using the recurrence relation \cite[p. 222]{nist} \begin{equation}\label{rec2}zJ_{\nu}'(z)=zJ_{\nu-1}(z)-\nu J_{\nu}(z),\end{equation} we obtain that the above equation can be rewritten as
$$(\nu-1)(\nu J_{\nu}^2(1)+2\nu J_{\nu-1}(1)J_{\nu}(1)-J_{\nu-1}^2(1))=0.$$
This equation has two roots: $\nu_1=1$ and $\nu_2\simeq0.1246{\dots},$ however, only $\nu_1$ is situated in $(\nu^{\ast},\infty).$
\end{proof}

\begin{proof}[\bf Proof of Theorem \ref{th5}]
Let $\nu\geq0.$ According to Lemma \ref{ccccccccccvvvvvvvvvvvv} we have $\alpha_{\nu,1}>1.$  Taking into account the proof of Theorem \ref{th2} when $r=1$ we get for $z\in\mathbb{D}$
\begin{align*}
\real&\left(1+z\frac{g_{\nu}''(z)}{g_{\nu}'(z)}\right)=\real\left(1-\sum_{n\geq1}\frac{2z^2}{\alpha_{\nu,n}^2-z^2}\right)
\geq\min_{|z|=1}\real\left(1-\sum_{n\geq1}\frac{2z^2}{\alpha_{\nu,n}^2-z^2}\right)\\&\geq1-\sum_{n\geq1}\frac{2}{\alpha_{\nu,n}^2-1}
=1+\frac{g_{\nu}''(1)}{g_{\nu}'(1)}=1+\frac{J_{\nu+2}(1)-3J_{\nu+1}(1)}{J_{\nu}(1)-J_{\nu+1}(1)}.
\end{align*}
According to Lemma \ref{lemlandau}, the function $\nu\rightarrow\alpha_{\nu,n}$ is strictly increasing on $[0,\infty)$ for every fixed natural number $n.$
Thus, it follows that the function $v:[0,\infty)\rightarrow\mathbb{R},$ defined by
$$v(\nu)=1+\frac{J_{\nu+2}(1)-3J_{\nu+1}(1)}{J_{\nu}(1)-J_{\nu+1}(1)}=1-\sum_{n\geq1}\frac{2}{\alpha_{\nu,n}^2-1},$$
is strictly increasing too. Here we used that $J_{\nu}(1)-J_{\nu+1}(1)\neq0$ when $\nu\geq0,$ since from \eqref{integrep} we have
$$J_{\nu}(1)-J_{\nu+1}(1)=\frac{2\left(\frac{1}{2}\right)^{\nu}}{\sqrt{\pi}\Gamma\left(\nu+\frac{1}{2}\right)}\int_0^1
\left(1-\frac{1}{2\nu+1}+\frac{t^2}{2\nu+1}\right)(1-t^2)^{\nu-\frac{1}{2}}\cos(t)dt>0.$$
Since the function $v$ is strictly increasing, it follows that if $\nu\geq\nu_{\alpha}=\nu_{\alpha}(g_{\nu}),$ then we get the inequality
\begin{equation}\label{s25zf99999999}
1+\frac{J_{\nu+2}(1)-3J_{\nu+1}(1)}{J_{\nu}(1)-J_{\nu+1}(1)}=v(\nu)\geq{v}(\nu_{\alpha})=1+\frac{J_{\nu_{\alpha}+2}(1)-3J_{\nu_{\alpha}+1}(1)}
{J_{\nu_{\alpha}}(1)-J_{\nu_{\alpha}+1}(1)}=\alpha.
\end{equation}
Now, from (\ref{s25zf99999999}) we get that $\nu_{\alpha}$ is the smallest value having the property that the condition $\nu\geq\nu_{\alpha}$ implies
$$\real\left(1+z\frac{g_{\nu}''(z)}{g_{\nu}'(z)}\right)>1+\frac{J_{\nu_{\alpha}+2}(1)-3J_{\nu_{\alpha}+1}(1)}{J_{\nu_{\alpha}}(1)-J_{\nu_{\alpha}+1}(1)}=\alpha \ \ \ \mbox{for all}\ \ \ z\in\mathbb{D}.$$
Thus, we proved that the function $g_{\nu}$ is convex of order $\alpha\in[0,1)$ in $\mathbb{D}$ if and only if
$\nu\geq\nu_{\alpha}(g_{\nu}),$ where $\nu_{\alpha}=\nu_{\alpha}(g_{\nu})$ is the unique  root of
the equation
$$ 1+\frac{J_{\nu+2}(1)-3J_{\nu+1}(1)}{J_{\nu}(1)-J_{\nu+1}(1)}=\alpha$$
situated in $[0,\infty).$ Observe that by using the recurrence relation \cite[p. 222]{nist}
\begin{equation}\label{recBes}J_{\nu}(z)+J_{\nu+2}(z)=\frac{2(\nu+1)}{z}J_{\nu+1}(z)\end{equation} we get that the above equation is equivalent to
$(2\nu+\alpha-2)J_{\nu+1}(1)=\alpha J_{\nu}(1).$
In particular, $g_{\nu}$ is convex if and only if
$\nu\geq\nu_0=\nu_0(g_{\nu}),$ where $\nu_0=1$ is the unique root of the equation
$J_{\nu}(1)-4J_{\nu+1}(1)+J_{\nu+2}(1)=0,$ that is, $(2\nu-2)J_{\nu+1}(1)=0.$ Here we used that $J_{\nu+1}(1)>0$ for $\nu>-\frac{3}{2},$ which follows from \eqref{inBesNeu}.
\end{proof}

\begin{proof}[\bf Proof of Theorem \ref{th6}]
Let $\nu\geq0.$ According to Lemma \ref{ccccccccccvvvvvvvvvvvv} we have $\beta_{\nu,1}>1.$  Taking into account the proof of Theorem \ref{th3} when $r=1$ we get for $z\in\mathbb{D}$
\begin{align*}
\real&\left(1+z\frac{h_{\nu}''(z)}{h_{\nu}'(z)}\right)=\real\left(1-\sum_{n\geq1}\frac{z}{\beta_{\nu,n}^2-z}\right)
\geq\min_{|z|=1}\real\left(1-\sum_{n\geq1}\frac{z}{\beta_{\nu,n}^2-z}\right)\\&\geq1-\sum_{n\geq1}\frac{1}{\beta_{\nu,n}^2-1}
=1+\frac{h_{\nu}''(1)}{h_{\nu}'(1)}=1+\frac{1}{2}\cdot\frac{J_{\nu+2}(1)-4J_{\nu+1}(1)}{2J_{\nu}(1)-J_{\nu+1}(1)}.
\end{align*}
According to Lemma \ref{lemlandau}, the function $\nu\rightarrow\beta_{\nu,n}$ is strictly increasing on $(-1,\infty)$ for every fixed natural number $n.$
Thus, it follows that the function $w:[0,\infty)\rightarrow\mathbb{R},$ defined by
$$w(\nu)=1+\frac{1}{2}\cdot\frac{J_{\nu+2}(1)-4J_{\nu+1}(1)}{2J_{\nu}(1)-J_{\nu+1}(1)}=1-\sum_{n\geq1}\frac{1}{\beta_{\nu,n}^2-1},$$
is strictly increasing too. Note that the function $w$ is well defined since from \eqref{integrep} for $\nu\geq0$ we have
$$2J_{\nu}(1)-J_{\nu+1}(1)=\frac{2\left(\frac{1}{2}\right)^{\nu}}{\sqrt{\pi}\Gamma\left(\nu+\frac{1}{2}\right)}
\int_0^1\left(2-\frac{1}{2\nu+1}+\frac{t^2}{2\nu+1}\right)(1-t^2)^{\nu-\frac{1}{2}}\cos(t)dt>0.$$
Now, since $w$ is strictly increasing, it follows that if $\nu\geq\nu_{\alpha}=\nu_{\alpha}(h_{\nu}),$ then we get the inequality
\begin{equation}\label{s25zf99999999ss}
1+\frac{1}{2}\cdot\frac{J_{\nu+2}(1)-4J_{\nu+1}(1)}{2J_{\nu}(1)-J_{\nu+1}(1)}=w(\nu)\geq{w}(\nu_{\alpha})=1+\frac{1}{2}\cdot\frac{J_{\nu_{\alpha}+2}(1)-4J_{\nu_{\alpha}+1}(1)}
{2J_{\nu_{\alpha}}(1)-J_{\nu_{\alpha}+1}(1)}=\alpha.
\end{equation}
Now, from (\ref{s25zf99999999ss}) we get that $\nu_{\alpha}$ is the smallest value having the property that the condition $\nu\geq\nu_{\alpha}$ implies
$$\real\left(1+z\frac{h_{\nu}''(z)}{h_{\nu}'(z)}\right)>1+\frac{1}{2}\cdot\frac{J_{\nu_{\alpha}+2}(1)-4J_{\nu_{\alpha}+1}(1)}{2J_{\nu_{\alpha}}(1)-J_{\nu_{\alpha}+1}(1)}=\alpha \ \ \ \mbox{for all}\ \ \ z\in\mathbb{D}.$$

Summarizing, we proved that the function $h_{\nu}$ is convex of order $\alpha\in[0,1)$ in $\mathbb{D}$ if and only if
$\nu\geq\nu_{\alpha}(h_{\nu}),$ where $\nu_{\alpha}=\nu_{\alpha}(h_{\nu})$ is the unique  root of
the equation
$$
 1+\frac{1}{2}\cdot\frac{J_{\nu+2}(1)-4J_{\nu+1}(1)}{2J_{\nu}(1)-J_{\nu+1}(1)}=\alpha
$$
situated in $[0,\infty).$ By using \eqref{recBes} the above equation can be rewritten as
$$(2\nu+2\alpha-4)J_{\nu+1}(1)=(4\alpha-3)J_{\nu}(1).$$ In particular, $h_{\nu}$ is convex if and only if
$\nu\geq\nu_0=\nu_0(h_{\nu}),$ where $\nu_0\simeq0.6688\dots$ is the unique root of the equation
$4J_{\nu}(1)-6J_{\nu+1}(1)+J_{\nu+2}(1)=0,$ that is, $2(\nu-2)J_{\nu+1}(1)+3J_{\nu}(1)=0.$

Finally, observe that, in particular, $h_{\nu}$ is convex of order $\frac{3}{4}$ if and only if
$\nu\geq\nu_{\frac{3}{4}}(h_{\nu}),$ where $\nu_{\frac{3}{4}}(h_{\nu})=\frac{5}{4}$ is the unique root of the equation
$(4\nu-5)J_{\nu+1}(1)=0.$ Here we used again that according to \eqref{inBesNeu} we have $J_{\nu+1}(1)>0$ for $\nu>-\frac{3}{2}.$
\end{proof}

\begin{proof}[\bf Proof of Theorem \ref{th7}]
Observe that
$$z\frac{\varphi_{\nu}''(z)}{\varphi_{\nu}'(z)}=
\frac{\nu(\nu+2)J_\nu(z^{\frac{1}{2}})-(2\nu+1)z^{\frac{1}{2}}J_\nu'(z^{\frac{1}{2}})+zJ_\nu''(z^{\frac{1}{2}})}
{-2\nu J_\nu(z^{\frac{1}{2}})+2z^{\frac{1}{2}}J_\nu'(z^{\frac{1}{2}})}.$$
Now, by using the recurrence relation \eqref{rec2} and the fact that $J_{\nu}$ satisfies \cite[p. 217]{nist}
$$z^2J_{\nu}''(z)+zJ_{\nu}'(z)+(z^2-\nu^2)J_{\nu}(z)=0,$$
we obtain that
$$1+z\frac{\varphi_{\nu}''(z)}{\varphi_{\nu}'(z)}=\frac{z^{\frac{1}{2}}J_{\nu}(z^{\frac{1}{2}})}{2J_{\nu+1}(z^{\frac{1}{2}})}-\nu=
\frac{1}{2}\left[\frac{z^{\frac{1}{2}}J_{\nu+1}'(z^{\frac{1}{2}})}{J_{\nu+1}(z^{\frac{1}{2}})}-(\nu-1)\right].$$
Thus, in view of the Mittag-Leffler expansion
$$\frac{zJ'_\nu(z)}{J_\nu(z)}=\nu-\sum_{n\geq1}\frac{2z^2}{j_{\nu,n}^2-z^2}$$
one has
$$1+z\frac{\varphi_{\nu}''(z)}{\varphi_{\nu}'(z)}=1-\sum_{n\geq1}\frac{z}{j_{\nu+1,n}^2-z}.$$
Now, by using \eqref{kk} it follows that for $z\in\mathbb{D}(0,j_{\nu+1,1})$ we have
\begin{equation}\label{varco}\real\left(1+z\frac{\varphi_{\nu}''(z)}{\varphi_{\nu}'(z)}\right)\geq 1-\sum_{n\geq1}\frac{r}{j_{\nu+1,n}^2-r}=1+r\frac{\varphi_{\nu}''(r)}{\varphi_{\nu}'(r)},\end{equation}
where $r=|z|.$ This inequality implies for $r\in(0,j_{\nu+1,1})$
$$\inf_{z\in\mathbb{D}(0,r)}\left\{\real\left(1+\frac{z\varphi_{\nu}''(z)}{\varphi_{\nu}'(z)}\right)\right\}=1+\frac{r\varphi_{\nu}''(r)}{\varphi_{\nu}'(r)}.$$
On the other hand, the function $\psi_{\nu}:(0,j_{\nu+1,1})\to\mathbb{R},$ defined by $$\psi_{\nu}(r)=1+\frac{r\varphi_{\nu}''(r)}{\varphi_{\nu}'(r)},$$ is  strictly decreasing
since $$\psi_{\nu}'(r)=-\sum_{n\geq1}\frac{j_{\nu+1,n}^2}{(j_{\nu+1,n}^2-r)^2}.$$ Observe also that $\lim_{r\searrow0}\psi_{\nu}(r)=1>\alpha$ and $\lim_{r\nearrow j_{\nu+1,1}}\psi_{\nu}(r)=-\infty,$ which means that for $z\in\mathbb{D}(0,r_4)$ we have
$$\real\left(1+\frac{z\varphi_{\nu}''(z)}{\varphi_{\nu}'(z)}\right)>\alpha,$$
if and only if $r_4$ is the unique root of
$$1+\frac{r\varphi_{\nu}''(r)}{\varphi_{\nu}'(r)}=\alpha,$$
situated in $(0,j_{\nu+1,1}).$
\end{proof}

\begin{proof}[\bf Proof of Theorem \ref{th8}]
First we prove that $J_{\nu+1}(1)>0$ if $\nu>\nu^{\star}.$ For this we show that $\nu\mapsto \log(J_{\nu}(1))$ is increasing on $(\nu^{\star}+1,0).$ This implies that $\nu\mapsto \log(J_{\nu+1}(1))$ is increasing on $(\nu^{\star},-1),$ and consequently $\nu\mapsto J_{\nu+1}(1)$ is increasing too on $(\nu^{\star},-1).$ Thus, $J_{\nu+1}(1)>J_{\nu^{\star}+1}(1)=0$ if $-1>\nu>\nu^{\star}.$ Combining this with \eqref{inBesNeu} we obtain that indeed $J_{\nu+1}(1)>0$ if $\nu>\nu^{\star}.$

By using the infinite product representation \cite[p. 235]{nist}
$$J_{\nu}(1)=\frac{\left(\frac{1}{2}\right)^{\nu}}{\Gamma(\nu+1)}\prod_{n\geq 1}\left(1-\frac{1}{j_{\nu,n}^2}\right),$$
it follows that
$$\Omega(\nu)=\frac{d \log J_{\nu}(1)}{d\nu}=-\log 2-\psi(\nu+1)+\sum_{n\geq 1}\frac{2\frac{d j_{\nu,n}}{d\nu}}{j_{\nu,n}(j_{\nu,n}^2-1)},$$
where $\psi(x)=\Gamma'(x)/\Gamma(x)$ stands for the digamma function, that is, for the logarithmic derivative of the Euler gamma function. We note that the function $\Omega$ is well defined since for $\nu>\nu^{\star}+1$ we have $j_{\nu,n}\neq1$ for each $n\in\{1,2,\dots\}.$ This is because $1=j_{\nu^{\star}+1,1}<j_{\nu,1}<j_{\nu,2}<{\dots}<j_{\nu,n}<{\dots}$ for each $\nu>\nu^{\star}+1$ and $n\in\{1,2,\dots\}.$ Now, by using the known inequalities \cite[p. 196]{isma}
$$\frac{d j_{\nu,n}}{d\nu}>\frac{2}{j_{\nu,n}}+\frac{8(\nu+1)^2}{j_{\nu,n}^3}, \ \ \nu>-1, \  \ n\in\{1,2,\dots\},$$
and \cite[p. 374]{alzer}
$$\psi(x)<\log x-\frac{1}{2x}, \ \ x>0,$$
we obtain
\begin{align*}\Omega(\nu)&>-\log 2-\psi(\nu+1)+\sum_{n\geq 1}\frac{2\frac{d j_{\nu,n}}{d\nu}}{j_{\nu,n}(j_{\nu,n}^2-1)}\\
&>-\log 2-\psi(\nu+1)+\sum_{n\geq 1}\frac{2\frac{d j_{\nu,n}}{d\nu}}{j_{\nu,n}^3}\\
&>-\log 2-\psi(\nu+1)+4\sum_{n\geq 1}\frac{1}{j_{\nu,n}^4}+16(\nu+1)^2\sum_{n\geq1}\frac{1}{j_{\nu,n}^6}\\
&=-\log 2-\psi(\nu+1)+\frac{3\nu+5}{4(\nu+1)^2(\nu+2)(\nu+3)}\\
&>-\log 2-\log(\nu+1)+\frac{1}{2(\nu+1)}+\frac{3\nu+5}{4(\nu+1)^2(\nu+2)(\nu+3)}>0\end{align*}
for $\nu\in(\nu^{\star}+1,0).$ Here we used the Rayleigh sums \cite[p. 502]{watson}
$$\sum_{n\geq 1}\frac{1}{j_{\nu,n}^4}=\frac{1}{16(\nu+1)^2(\nu+2)}, \ \ \sum_{n\geq 1}\frac{1}{j_{\nu,n}^6}=\frac{1}{32(\nu+1)^3(\nu+2)(\nu+3)},$$
and the fact that function $f:(-1,0)\to\mathbb{R},$ defined by
$$f(x)=-\log 2-\log(x+1)+\frac{1}{2(x+1)}+\frac{3x+5}{4(x+1)^2(x+2)(x+3)},$$
is decreasing as the sum of three decreasing functions, and thus $f(x)>f(0)\simeq0.0151{\dots}>0$ if $x\in(-1,0).$

We would like to note that there is another way to prove that $J_{\nu+1}(1)>0$ if $\nu>\nu^{\star}.$ Since we have $2^{\nu+1}\Gamma(\nu+2)>0$ for $\nu\in(-2,\infty)$ it follows that $J_{\nu+1}(1)$ and $2^{\nu+1}\Gamma(\nu+2)J_{\nu+1}(1) $ have the same sign and the same roots in the interval  $(-2,\infty).$ On the other hand, we have  $$h_{\nu+1}(1)=2^{\nu+1}\Gamma(\nu+2)J_{\nu+1}(1)=1+\sum_{n\geq1}\frac{(-1)^n}{4^nn!(\nu+2)\dots(\nu+n+1)}.$$
Let $\Lambda:(-2,\infty)\rightarrow\mathbb{R}$  be the function defined by
$$\Lambda(\nu)=h_{\nu+1}(1)=1+\sum_{n\geq1}\frac{(-1)^n}{4^nn!(\nu+2)\dots(\nu+n+1)}.$$
We will show that $\Lambda$  is strictly  increasing. We have
\begin{align*}
\frac{d\Lambda(\nu)}{d\nu}&=\sum\limits_{n\geq1}\frac{(-1)^{n-1}}{4^nn!}\frac{\sum\limits_{k=1}^n\frac{1}{\nu+k+1}}{(\nu+2)\dots(\nu+n+1)}\\
&=\sum_{n\geq0}\left[\frac{\sum\limits_{k=1}^{2n+1}\frac{1}{\nu+k+1}}{4^{2n+1}(2n+1)!(\nu+2)\dots(\nu+2n+2)}-\frac{\sum\limits_{k=1}^{2n+2}\frac{1}{\nu+k+1}}{4^{2n+2}(2n+2)!(\nu+2)\dots(\nu+2n+3)}\right]\\
&=\sum_{n\geq0}\frac{1}{4^{2n+2}(2n+2)!\prod\limits_{k=1}^{2n+1}(\nu+k+1)}\left[\sum\limits_{k=1}^{2n+1}\frac{1}{\nu+k+1}-\frac{1}{4(2n+2)(2n+3)}\sum_{k=1}^{2n+2}\frac{1}{\nu+k+1}\right].
\end{align*}
The last expression is certainly  positive because the next inequalities hold for $n\in\{1,2,\dots\}$ and $\nu>-2$
$$\sum_{k=2}^{2n+1}\frac{1}{\nu+k+1}>\frac{1}{4(2n+2)(2n+3)}\sum\limits_{k=2}^{2n+1}\frac{1}{\nu+k+1},$$
and
$$\frac{1}{\nu+2}-\frac{1}{4(2n+2)(2n+3)}\frac{1}{\nu+2}>\frac{1}{2}\frac{1}{\nu+2}>\frac{1}{4(2n+2)(2n+3)}\frac{1}{\nu+2n+3}.$$
Consequently we have that $\Lambda$  is a strictly increasing function, and this implies that the equation $J_{\nu+1}(1)=0$ has a unique root in the interval   $(-2,\infty).$ This root is $\nu^\star=-1.7744{\dots}.$ It follows also that if $\nu>\nu^\star$ then $\Lambda(\nu)>\Lambda(\nu^\star)=0,$ or equivalently, $J_{\nu+1}(1)>0$ if $\nu>\nu^{\star}.$

By using \eqref{varco} for $r=1$ and the fact that the function $\psi$ is strictly decreasing, we get that
$$\real\left(1+z\frac{\varphi_{\nu}''(z)}{\varphi_{\nu}'(z)}\right)\geq 1-\sum_{n\geq1}\frac{1}{j_{\nu+1,n}^2-1}=1+\frac{\varphi_{\nu}''(1)}{\varphi_{\nu}'(1)}$$
for all $z\in\mathbb{D}.$ Since according to \eqref{gyokder} for each $n\in\{1,2,\dots\}$ the function $\nu\mapsto j_{\nu,n}$ is strictly increasing on $(-1,\infty),$ it follows that the function $\nu\mapsto j_{\nu+1,n}$ is strictly increasing on $(-2,\infty)$ for each $n\in\{1,2,\dots\}.$ Consequently, the function $\phi:(\nu^{\star},\infty)\rightarrow\mathbb{R},$ defined by
$$\phi(\nu)=\frac{J_{\nu}(1)}{2J_{\nu+1}(1)}-\nu=1-\sum_{n\geq1}\frac{1}{j_{\nu+1,n}^2-1},$$
is strictly increasing too. Note that the function $\phi$ is well defined since for $\nu>\nu^{\star}$ we have $J_{\nu+1}(1)\neq0.$ Now, using the fact that $\phi$ is strictly increasing we obtain that if $\nu\geq\nu_{\alpha}=\nu_{\alpha}(\varphi_{\nu}),$ then the next inequality is valid
\begin{equation}\label{kutya}
\frac{J_{\nu}(1)}{2J_{\nu+1}(1)}-\nu=\phi(\nu)\geq{\phi}(\nu_{\alpha})=\frac{J_{\nu_{\alpha}}(1)}{2J_{\nu_{\alpha}+1}(1)}-\nu=\alpha.
\end{equation}
Now, from (\ref{kutya}) we get that $\nu_{\alpha}$ is the smallest value having the property that the condition $\nu\geq\nu_{\alpha}$ implies
$$\real\left(1+z\frac{\varphi_{\nu}''(z)}{\varphi_{\nu}'(z)}\right)>\frac{J_{\nu_{\alpha}}(1)}{2J_{\nu_{\alpha}+1}(1)}-\nu=\alpha \ \ \ \mbox{for all}\ \ \ z\in\mathbb{D}.$$

In other words, we proved that the function $\varphi_{\nu}$ is convex of order $\alpha\in[0,1)$ in $\mathbb{D}$ if and only if
$\nu\geq\nu_{\alpha}(\varphi_{\nu}),$ where $\nu_{\alpha}=\nu_{\alpha}(\varphi_{\nu})$ is the unique  root of
the equation
$$
 \frac{J_{\nu}(1)}{2J_{\nu+1}(1)}-\nu=\alpha
$$
situated in $\left(-{2},\infty\right).$ In particular, $\varphi_{\nu}$ is convex if and only if
$\nu\geq\nu_0=\nu_0(\varphi_{\nu}),$ where $\nu_0\simeq-1.5623\dots$ is the unique root of the equation
$J_{\nu}(1)=2\nu J_{\nu+1}(1).$
\end{proof}

\subsection*{Added in proof} Recently, Baricz et al. \cite{dini} presented an alternative proof of Lemma \ref{lem4} by using the Hadamard theorem concerning the growth order of entire functions. Moreover, Baricz and Sz\'asz \cite{close} presented an alternative proof of the convexity of $h_{\nu}$ by using a result of Shah and Trimble concerning transcendental entire functions.

\end{document}